\documentclass[12pt]{article}
\usepackage[margin=2.3cm,a4paper]{geometry}
\usepackage{amsmath}
\usepackage{amssymb}
\usepackage{amsthm}
\usepackage[utf8]{inputenc}



\usepackage{fullpage}
\usepackage{tabularx}
\usepackage{multirow}
\usepackage{rotating}
\usepackage{enumerate}

\usepackage{graphicx}  
\usepackage{caption,subcaption}

\usepackage[algo2e,vlined,longend,linesnumbered,ruled]{algorithm2e}
\usepackage[software,hardware]{mymacros}
\newcommand{\smb}{\left[\begin{smallmatrix}}
\newcommand{\sme}{\end{smallmatrix}\right]}

\usepackage{mathdots}
\usepackage[colorlinks=true]{hyperref}

\theoremstyle{definition}
\newtheorem{defin}{Definition}
\newtheorem{theorem}{Theorem}

\newtheorem{remark}{Remark}
\newtheorem{lemma}[theorem]{Lemma}
\newtheorem{corollary}[theorem]{Corollary}


\newcommand{\expm}[1]{\ensuremath{\mathop{\mathrm{e}^{#1}}}}
\newcommand{\jmax}{\ensuremath{j_{\max}}}

\title{Inexact methods for the low rank solution to large scale Lyapunov equations} %
\author{Patrick K\"{u}rschner\thanks{Max Planck Institute for
Dynamics of Complex Technical Systems,  Computational Methods in  Systems and
Control Theory, Magdeburg, Germany, {\tt
  kuerschner@mpi-magdeburg.mpg.de}} \and Melina A. Freitag \thanks{Department of Mathematical Sciences,
		University of Bath, Claverton Down, BA2 7AY, United Kingdom, {\tt
  m.a.freitag@bath.ac.uk}}}

\begin{document}
\maketitle

\begin{abstract}
The rational Krylov subspace method (RKSM) and the low-rank alternating directions implicit (LR-ADI) iteration are established numerical tools for computing
low-rank solution factors of 
large-scale Lyapunov equations. In order to generate the 
basis vectors for the RKSM, or extend the low-rank factors within the LR-ADI method the repeated solution to a shifted 
linear system is necessary. For very large systems this solve is usually implemented using
iterative methods, leading to inexact solves within this inner iteration. We derive theory for a relaxation 
strategy within these inexact solves, both for the RKSM and the LR-ADI method. Practical choices for relaxing the solution tolerance within the inner linear system are then provided. The theory is supported by several numerical examples.
\end{abstract}

\section{Introduction}
We consider the numerical solution of large scale Lyapunov equations of the form  
\begin{align}\label{lyap}
 AX+XA^T=-BB^T,
\end{align}
where $A\in\Rnn,~B\in\Rnr$. Lyapunov equations play a fundamental role in many areas of applications, such as control theory, model reduction and signal
processing, see, e.g.~\cite{Antoulas05,BenS13}. Here we assume that the spectrum $A$, $\Lambda(A)$, lies in the open left half plane, e.g. $\Lambda(A)\in\C_-$
and that the right hand side is of low rank, e.g. $r\ll n$, which is often the case in practice. A large and growing amount of literature considers the
solution to~\eqref{lyap}, see~\cite{Simoncini16} and references therein for an overview of developments and methods.

The solution matrix $X$ of~\eqref{lyap} is, in general, dense, making it virtually impossible to store it for large dimensions $n$. 

For a low-rank right hand side, however, the solution $X$ often has
a very small numerical rank~\cite{Pen99,Sab07,Gra04,TruV07,BakES15,BecT17} and many algorithms have been developed that approximate $X$ by a low-rank matrix $X\approx ZZ^T$, where $Z\in
\Rns$, $s\ll n$. Several methods belong to such low-rank algorithms, for instance, projection type methods based on Krylov subspaces~\cite{morJaiK94,Sim07,DruS11,
DruKS11, Sim16} and low-rank alternating directions implicit (ADI) methods~\cite{Pen99,LiW02,BenLT09,BenS13,BenKS14,Kue16}. 

This paper considers both the rational Krylov subspace method (from the family of projection type methods) and the low-rank ADI method. 
One of the computationally most expensive parts in both methods is that, in each iteration step, shifted linear systems of the form
\[
(A-\sigma I)y=z,\quad z\in\Rnr,
\]
need to be solved, where the shift $\sigma$ is usually variable and both the shift $\sigma$ and the right hand side $z$ depend on the particular algorithm used.
Normally these linear systems are solved by sparse-direct or iterative methods. When iterative methods, such as preconditioned Krylov subspace methods, are used
to solve the linear systems, then these solves are implemented inexactly and we obtain a so-called inner-outer iterative method. The outer method is (in our case) a rational Krylov subspace method or a low-rank ADI iteration. The inner problem is the iterative solution to the linear systems. The inner solves are often
carried out at least as accurately as the required solution accuracy for the Lyapunov equation (cf., e.g., the numerical experiments in~\cite{DruKS11}), usually in 
terms of the associated Lyapunov residual norms. It turns out that this is not necessary, and the underlying theory is the main contribution of this paper.

Inexact methods have been considered for the solution to linear systems and eigenvalue problems (see~\cite{SimSz03,EstS04,BouF05,Sim05,FrSp09} and references therein).
One focus has been on inexact inverse iteration and similar methods, where, in general, the accuracy of the inexact solve has to be increased to obtain
convergence~\cite{FrSp08}. For subspace expansion methods, such as the Krylov methods considered in~\cite{BouF05,SimSz03, EstS04,Sim05,GaaS17}, it has been observed
numerically and shown theoretically that it is possible to relax the solve tolerance as the outer iteration proceeds.

In this paper we prove that for both the rational Krylov subspace method and the low-rank ADI algorithm we can relax the stopping  tolerance
within the inner solve, a similar behavior as observed for Krylov methods with inexact
matrix-vector products applied to eigenvalue methods~\cite{Sim05,FrSp09}, linear systems~\cite{SimSz03,EstS04,BouF05}, and matrix functions~\cite{GaaS17}.
To this end, we provide practical relaxation strategies for both methods and give numerical examples.


The paper is organised as follows. In Section \ref{sec:rksm} we review results about rational Krylov subspace methods. Those are used to show important properties about Galerkin projections. A new inexact rational Arnoldi decomposition is derived, extending the theory of \cite{Ruh94c}. We show in Theorem \ref{th:deltaybound}, Corollary \ref{corYentrybound} and Corollary \ref{cor:deltahybound} that the entries of the solution to the projected Lyapunov equation have a decreasing pattern. This crucial novel result is then used to demonstrate that, in the inexact rational Krylov subspace method, the linear system solve can be relaxed, in a way inversely proportional to this pattern. Section \ref{sec:adi} is devoted to low-rank ADI methods. We show that the low-rank factors arising within the inexact ADI iteration satisfy an Arnoldi like decomposition in Theorem \ref{thm:ADI_ratarndec}. This theory is again new and significant for deriving a specially tailored relaxation strategy for inexact low-rank ADI methods in Theorem \ref{thm:ADI_theorelax}. Finally, in Section \ref{sec:num} we test several practical relaxation strategies and provide numerical evidence for our findings. Our examples show that, in particular for very large problems, we can save up to half the number of inner iterations within both methods for solving Lyapunov equations. 

\paragraph{Notation} Throughout the paper, if not stated otherwise, we use $\|\cdot\|$ to denote the Euclidean vector and associated induced matrix norm. $(\cdot)^*$ stands for the transpose and complex conjugate transpose for real and, respectively, complex matrices and vectors. The identity matrix of dimension $k$ is denoted by $I_k$ with the subscript omitted if the dimension is clear from the context. The $j$th column of the identity matrix is $e_j$. Spectra of a matrices $A$ and matrix pairs $(A,M)$ are denoted by $\Lambda(A)$ and $\Lambda(A,M)$, respectively. The smallest (largest) eigenvalues and singular values of a matrix $X$ are denoted by $\lambda_{\min}(X)$ ($\lambda_{\max}(X)$ and $\sigma_{\min}(X)$ ($\sigma_{\max}(X))$.

\section{Rational Krylov subspace methods and inexact solves}
\label{sec:rksm}
\subsection{Introduction and rational Arnoldi decompositions}

Projection methods for~\eqref{lyap} follow Galerkin principles similar to, e.g., the Arnoldi method for eigenvalue computations or linear systems (in this case
called full orthogonalization method).
Let $\cQ=\range{Q}\subset \Cn$ be a subspace of $\Cn$ with a matrix $Q\in\C^{n\times k}$, $k\ll n$ whose columns form an orthonormal basis of $\cQ$: $Q^*Q=I_k$. We look for low-rank approximate solutions to~\eqref{lyap} in the form $Q\tX Q^*$ with $\tX=\tX^*\in\Ckk$, i.e. 
\[
X\in\cZ:=\lbrace Q\tX Q^*\in\Cnn,~\tX=\tX^*,~\range{Q}=\cQ\rbrace.	
\]
The Lyapunov residual matrix for approximate solutions $X\in\cZ$ is
\begin{align}\label{lyapres}
\cR:=\cR(Q\tX Q^*)=A(Q\tX Q^*)+(Q\tX Q^*)A^*+BB^*.
\end{align}
Imposing a Galerkin condition $\cR\perp\cZ$~\cite{morJaiK94,Saa90}
leads to the projected Lyapunov equation 
\begin{align}\label{projlyap0}
T \tX+\tX T^*+Q^*BB^*Q=0,\quad T:= Q^*AQ,
\end{align}
i.e., $\tX$ is the solution of a small, $k$-dimensional Lyapunov equation which can be solved
by algorithms employing dense numerical linear algebra, e.g., the Bartels-Stewart method~\cite{BarS72}. 
Throughout this article we assume that the compressed problems~\eqref{projlyap0} admit a unique solution which is ensured by
$\Lambda(T)\cap-\Lambda(T)=\emptyset$, which holds, for example, when $\Lambda(T)\subset\C_-$.
A sufficient condition for ensuring $\Lambda(T)\subset\C_-$ is $A+A^*<0$. In practice, however, this condition is rather restrictive because, on the one hand, often $\Lambda(T)\subset\C_-$ holds even for indefinite $A+A^*$ and, one the other hand, the discussed projection methods will still work if the restriction $T$ has eigenvalues in $\C_+$ provided that $\Lambda(T)\cap-\Lambda(T)=\emptyset$ holds.

Usually, one produces sequences of
subspaces of increasing dimensions in an iterative manner, e.g.
$\cQ_1\subseteq\cQ_2\subseteq\ldots\subseteq\cQ_j$. 
For practical problems, using the standard (block) Krylov subspace
\begin{align}\label{standardKrylov}
	\cQ_j=\cK_j(A,B)=\range{\left[B,AB,\ldots,A^{j-1}B\right]}
\end{align}
is not sufficient and leads to a slowly converging process and, hence, large low-rank solution factors.
A better performance can in most cases be achieved by using rational Krylov subspaces which we use here in the form
\begin{align}
 \cQ_j=\cR\cK_j(A,B,\boldsymbol{\xi}):=\range{\left[B,(A-\xi_2 I)^{-1}B,\ldots,\prod\limits_{i=2}^j(A-\xi_i I)^{-1}B\right]},
\end{align}
with shifts $\xi_i\in\C_+$ (hence $\xi_i\notin\Lambda(A)$), $i = 2,\ldots,j$.
An orthonormal basis for $\cR\cK_j$ can be computed by the (block) rational Arnoldi process~\cite{Ruh94c} 
leading to $Q_j\in\C^{n\times jr}$. 
The resulting rational Krylov subspace method (RKSM) for computing approximate solutions to~\eqref{lyap} is given in 
Algorithm~\ref{alg:rksm}.
\begin{algorithm2e}[h]
    \SetEndCharOfAlgoLine{} 
  \SetKwInOut{Input}{Input}
  \SetKwInOut{Output}{Output}
  \caption[Rational Krylov subspace method]{Rational Krylov subspace
    method for solving~\eqref{lyap}}
  \label{alg:rksm}
  \Input{$A,~B$ as in~\eqref{lyap}, shifts $\{\xi_2,\dots,\xi_{j_{\max}}\}\subset\C_+$, 
    tolerance $0<\tau\ll1$.}
  \Output{$Q\in\C^{n\times jr}$, $\tX\in\C^{jr\times jr}$ such that
    $Q\tX Q^*\approx X$ with $jr\ll n$.}
 Compute thin QR decomposition of $B$: $Q_1\beta=B$, $j=1$.\nllabel{rksmO1}\; 
  \While{$\|\cR(Q_{j}Y_jQ_{j}^*)\|>\tau\|BB^*\|$}{%
 Solve $(A-\xi_{j+1}I)w_{j+1}=q_{j}$ for $w_{j+1}$.\nllabel{rksmLGS}\;
 Orthogonally extend basis
    matrix $Q_{j}$:\\
		$h_{1:j,j}=Q_j^*w_{j+1}$, $\hw_{j+1}=w_{j+1}-Q_{j}h_{1:j,j}$,\\
		$q_{j+1}h_{j+1,j}=\hw_{j+1}$,
$Q_{j+1}=[Q_{j},~q_{j+1}]$.\nllabel{rksmO2}\;
Projected equation: \allowbreak $T_j=Q_j^*(AQ_j)$, $\tB_j=Q_j^*B=[\beta^*,0]^*$.\nllabel{rksmAj}\;
Solve    
$T_jY_j+Y_jT_j^*+\tB_j\tB_j^*=0$ for $Y_j$.\nllabel{solveY}\;
Estimate residual norm $\cR(Q_jY_j Q_j^*)$.\nllabel{rksmres}\;
$j=j+1$.\;
}
$\tX = Y_j$, $Q = Q_j$\; 
\end{algorithm2e}
The shift parameters are crucial for a rapid reduction of the Lyapunov residual norm $\|\cR_j\|:=\|\cR(Q_{j}Y_{j}Q_{j}^*)\|$ (cf.~\eqref{lyapres}) and can be generated a-priori or
adaptively in the course of the iteration~\cite{DruS11}. 
The extension of the orthonormal basis $\range{Q_j}$ by $w$ in Line~\ref{rksmO2} of Algorithm~\ref{alg:rksm} should be done by a robust orthogonalization process, e.g., using a repeated
(block) Gram-Schmidt process. 
The orthogonalization coefficients are accumulated in the block Hessenberg matrix $H_j=[h_{i,k}]\in\C^{jr\times jr}$, $h_{i,k}\in\C^{r\times r}$,
$i,k=1,\ldots,j$, $i<k+2$. If the new basis blocks are normalized using a thin QR factorization, then the $h_{i+1,i}$ matrices are upper triangular.

In the following we summarise properties of the RKSM method, in particular known results about the rational Arnoldi decomposition and the Lyapunov residual,
which will be crucial later on. We simplify this discussion of the rational Arnoldi process and Algorithm~\ref{alg:rksm} to the case $r=1$. From there, one can
generalize to the situation $r>1$.  

Assuming at first that the linear systems in Line~\ref{rksmLGS} of Algorithm~\ref{alg:rksm} are solved \emph{exactly}, then, after $j$ steps of
Algorithm~\ref{alg:rksm}, the generated quantities satisfy a rational Arnoldi decomposition~\cite{Ruh94c,Gue13,DruS11}
\begin{align}\label{exactRatArnDecomp}
AQ_j&=Q_jT_j+g_jh_{j+1,j}e_{j}^*H_j^{-1},\quad g_j:=q_{j+1}\xi_{j+1}-(I-Q_jQ_j^*)A q_{j+1},
\end{align}
from which it follows that the restriction of $A$ onto $\cR\cK_j$ can be expressed as
\begin{align}\label{exactProj}
	T_j&=Q_j^*(AQ_j)=(I+H_jD_j)H_j^{-1}-Q_j^*Aq_{j+1}h_{j+1,j}e_j^*H_j^{-1},
\end{align}
with $D_j=\diag{\xi_2,\ldots,\xi_{j+1}}$. $T_j$, $H_j$ and $D_j$ are square matrices of size $j$.
The relation~\eqref{exactProj} can be used to compute the
restriction $T_j$ efficiently in terms of memory usage.  
The rational Arnoldi decomposition can also be expressed as
\begin{align}\label{eq:hesshess}
AQ_{j+1}H_{j+1,j}=Q_{j+1}M_{j+1,j},\quad M_{j+1,j}:=\smb I+H_jD_j\\\xi_{j+1}h_{j+1,j}e_{j}^*\sme,
\end{align}
see, e.g.,~\cite{Ruh94c,BerG15} and $T_j$ can be associated with the Hessenberg-Hessenberg pair $(M_{j,j},H_{j,j})$. Here we generally use $H_j =H_{j,j}$ and
$M_j = M_{j,j}$ for simplicity.  
$H_{j+1,j}$ and $M_{j+1,j}$ are $(j+1)\times j$ matrices.

The final part of this section discusses how the Lyapunov residual in Line~\ref{rksmres} of Algorithm~\ref{alg:rksm} can be computed efficiently. If the
projected Lyapunov equation in Line~\ref{solveY}  
\begin{align}\label{projlyap}
T_jY_j+Y_jT_j^*+\tB_j\tB_j^*&=0
\end{align}
is solved for $Y_j$, then for the Lyapunov residual~\eqref{lyapres} after $j$ steps of the RKSM, we have
\begin{subequations}\label{exactRKres}
\begin{align}\label{exactRKres_R}
\cR_j= \cR(Q_jY_j Q_j^*)&=F_j+F_j^*,
\end{align}
with
\begin{align}\label{exactRKres_F}
	F_j&:=L_jQ_j^*\in\C^{n\times n},\quad L_j:=g_jh_{j+1,j}e_{j}^*H_j^{-1}Y_j\in\C^{n\times j},
\end{align}
\end{subequations}
as was shown in~\cite{DruS11,Sim16}. The term $L_j$ is sometimes referred to as ``semi-residual''. 
Since $g_j\in\Cn$, $h_{j+1,j}e_{j}^*H_j^{-1}Y_j\in\C^{1\times n}$, the matrix $F_j$ is of rank one. In general for $B\in\Rnr$, $r>1$ and a block-RKSM,  $F_j$ is of rank $r$.

Note that relation~\eqref{exactRKres_R} is common for projection methods for~\eqref{lyap}, but the special structure of $L_j$ in 
\eqref{exactRKres_F} arises from the rational Arnoldi process.
Because $g_j^*Q_j=0$ we have that $F_j^2=0$ and $\|\cR_j\|=\|F_j\|=\|L_j\|$ (using the relationship between the spectral norm and the largest eigenvalues/singular values). This enables an efficient way for computing the norm of the Lyapunov residual $\cR_j$ via 
\begin{align}\label{eq:compres}
\|\cR_j\|&=\|\|g_j\|h_{j+1,j}e_{j}^*H_j^{-1}Y_j\|, 
\end{align}
for $g_j\in\Cn$.

\subsection{The inexact rational Arnoldi method}
\label{sec:RKSMinexact}
The solution of the linear systems for $w_{j+1}$ in each step of RKSM (line~\ref{rksmLGS} in Algorithm~\ref{alg:rksm}) is one of the  computationally most expensive
stages  of the rational Arnoldi method. In this section we investigate inexact solves of this linear systems, e.g., by iterative Krylov subspace methods, but we
assume that this is the only source of inaccuracy in the algorithm. 
Clearly, some of the above properties do not hold anymore if the linear systems are solved inaccurately. Let
\begin{align*}
s_j:=q_j-(A-\xi_{j+1} I)\tw_{j+1},\quad\text{with}\quad\|s_j\|\leq \tau^{\text{LS}}_j 
\end{align*}
be the residual vectors with respect to the linear systems and inexact solution vectors $\tw_{j+1}$ with $\tau^{\text{ls}}_j<1$ being the solve tolerance of
the linear system at step $j$ of the rational Arnoldi method.

The derivations in~\cite{Ruh94c},\cite[Proof of Prop. 4.2.]{DruS11} can be modified with 
\begin{align*}
q_j&=(A-\xi_{j+1}I)Q_{j+1}h_{1:j+1,j}+s_j,
\end{align*}
in order to obtain 
\begin{align*}
AQ_{j+1}H_{j+1,j}=Q_{j+1}M_{j+1,j}-S_j,~S_j:=[s_1,\ldots,s_j],
\end{align*}
where $M_{j+1,j}$ is as in~\eqref{eq:hesshess}. Note that for $S_j=0$ we recover~\eqref{eq:hesshess}, as expected. Hence $AQ_jH_j=[Q_j(I+H_jD_j)+(\xi_{j+1}q_{j+1}-Aq_{j+1})e^*_jh_{j+1,j}]-S_j$ which leads to the perturbed rational Arnoldi relation
\begin{align}\label{inexactRatArnDecomp1}
	AQ_j&=Q_jT^{\text{impl.}}_j+g_je_j^*h_{j+1,j}H_j^{-1}-S_jH_j^{-1},
\end{align}
where $T^{\text{impl.}}_j:=[(I+H_jD_j)-Q_j^*Aq_{j+1})e^*_jh_{j+1,j}]H_j^{-1}$ marks the restriction of $A$ by the implicit formula~\eqref{exactProj}.
However, the explicit restriction of $A$ can be written as
\begin{align}\label{inexactProj}
\begin{split}
T^{\text{expl.}}_j&:=Q_j^*(AQ_j)=T^{\text{impl.}}_j-Q_j^*S_jH_j^{-1},\\
&=((I+H_jD_j)-Q_j^*Aq_{j+1})e^*_jh_{j+1,j}-Q_j^*S_j)H_j^{-1}
\end{split}
\end{align}
which highlights the problem that the implicit (computed) restriction $T^{\text{impl.}}_j$ from~\eqref{exactProj} is not the true restriction of $A$
onto $\range{Q_j}$.
In fact, the above derivations reveal that
\begin{align*}
	T^{\text{impl.}}_j=Q_j^*(A+E_j)Q_j,\quad E_j:=S_jH_j^{-1}Q_j^*,
\end{align*} 
i.e., the implicit restriction~\eqref{exactProj} is the exact restriction of a perturbation of $A$~\cite{Gue10,LehM98}.

Similar to~\cite{Gue10} we prefer to use $T^{\text{expl.}}_j$ for defining the projected problem as this keeps the whole process slightly closer to the original
matrix $A$. 
Moreover,~\eqref{inexactProj} reveals that unlike $T^{\text{impl.}}$, the explicit restriction
$T^{\text{expl.}}_j$ is not connected to a Hessenberg-Hessenberg pair because the term $Q_j^*S_j$ has no Hessenberg structure.
Computing $T^{\text{expl.}}_j$ by either~\eqref{inexactProj} or explicitly generating $T^{\text{expl.}}_j=Q_j^*(AQ_j)$ by adding new
columns and rows to $T^{\text{expl.}}_{j-1}$ will double the memory requirements because an additional $n\times j$ array, $S_j\in\C^{n\times j}$ or $W_j:=AQ_j$, has to be stored. Hence, the per-step storage requirements are similar to the extended Krylov subspace method (see, e.g., \cite{Sim07} and the corresponding paragraph in Section~\ref{ssec:imple_gen}).
 Alternatively, if matrix vector products with $A^*$ are available, we can also update the restriction
	\begin{align*}
	T_j^{\text{expl.}}=Q_j^*(AQ_j)=\smb 
	T^{\text{expl.}}_{j-1}&Q_{j-1}^*(Aq_j)\\
	(z_j)^*Q_{j-1}&q_{j}^*(Aq_j)
	\sme,\quad z_j:=A^*q_j,
	\end{align*}
	thereby bypassing the increased storage requirements. This will be the method of choice in our numerical experiments.
	Trivially, we may also compute $T_j^{\text{expl.}}$ from scratch row/columnwise, requiring $j$ matrix vector products with $A$ but circumventing the storage increase.

Using $T^{\text{expl.}}_j$ leads to the \emph{inexact rational Arnoldi relation}
\begin{align}\label{inexactRatArnDecomp2}
AQ_j&=Q_jT^{\text{expl.}}_j+\tg_jH_j^{-1},\quad \tg_j:=g_jh_{j+1,j}e_j^*-(I-Q_jQ_j^*)S_j,
\end{align}
which we employ in the subsequent investigations.
\begin{lemma}
Consider the approximate solution $Q_jY_jQ_j^*$ of~\eqref{lyap} after $j$ iterations of inexact RKSM, where $Y_j$ solves the projected Lyapunov
equation~\eqref{projlyap}
defined by either $T^{\text{expl.}}_j$ or $T^{\text{impl.}}_j$. 
Then the true Lyapunov residual matrix $\cR^{\text{true}}_j=\cR(Q_jY_jQ_j^*)$ can be written in the following forms.
\begin{subequations}\label{inexactRKres_R}
\begin{enumerate}
 \item[(a)] If $T^{\text{expl.}}_j$ is used it holds
 \begin{align}\label{inexactRKres_FE}
\cR^{\text{true}}_j &=F^{\text{expl.}}_j+(F^{\text{expl.}}_j)^*,\quad F^{\text{expl.}}_j:=\tg_jH_j^{-1}Y_jQ_j^*=F_j-(I-Q_jQ_j^*)S_jH_j^{-1}Y_jQ_j^*, 
\end{align}
\item[(b)] and, if otherwise $T^{\text{impl.}}_j$ is used, it holds
 \begin{align}\label{inexactRKres_FI}
\cR^{\text{true}}_j &=F^{\text{impl.}}_j+(F^{\text{impl.}}_j)^*,\quad F^{\text{impl.}}_j:=F_j-S_jH_j^{-1}Y_jQ_j^*. 
\end{align}
\end{enumerate}
\end{subequations}
\end{lemma}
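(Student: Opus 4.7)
The plan is to substitute the appropriate inexact rational Arnoldi decomposition into the definition of the Lyapunov residual and then use the projected Lyapunov equation to cancel the $BB^*$ contribution. Both cases (a) and (b) will follow from essentially the same computation, differing only in which decomposition and which projected equation are used.

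For part (a), I would start from $\cR^{\text{true}}_j = A(Q_jY_jQ_j^*) + (Q_jY_jQ_j^*)A^* + BB^*$ and insert the inexact rational Arnoldi relation \eqref{inexactRatArnDecomp2}. This produces
\[
\cR^{\text{true}}_j = Q_j\bigl(T^{\text{expl.}}_j Y_j + Y_j (T^{\text{expl.}}_j)^*\bigr)Q_j^* + \tg_j H_j^{-1} Y_j Q_j^* + Q_j Y_j H_j^{-*} \tg_j^* + BB^*.
\]
Since $Y_j$ solves \eqref{projlyap} with $T_j = T^{\text{expl.}}_j$, the bracket equals $-\tB_j\tB_j^* = -Q_j^*BB^*Q_j$. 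The crucial observation is that $B$ lies in the range of $Q_j$: the first block column of $Q_j$ is $Q_1 = B\beta^{-1}$ from Line~\ref{rksmO1} of Algorithm~\ref{alg:rksm}, so $Q_jQ_j^*B = B$ and therefore $Q_jQ_j^*BB^*Q_jQ_j^* = BB^*$. This makes the projected $BB^*$ term cancel exactly against the $+BB^*$ of the residual, leaving $\cR^{\text{true}}_j = F^{\text{expl.}}_j + (F^{\text{expl.}}_j)^*$ with $F^{\text{expl.}}_j := \tg_j H_j^{-1} Y_j Q_j^*$. The alternative expression in \eqref{inexactRKres_FE} then follows immediately by substituting $\tg_j = g_j h_{j+1,j} e_j^* - (I - Q_jQ_j^*)S_j$ and identifying $g_j h_{j+1,j} e_j^* H_j^{-1} Y_j Q_j^* = L_j Q_j^* = F_j$ via the definitions in \eqref{exactRKres_F}.

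For part (b), I would repeat the argument but now start from the implicit decomposition \eqref{inexactRatArnDecomp1}, and use that $Y_j$ solves the projected equation with $T^{\text{impl.}}_j$. Substituting yields
\[
\cR^{\text{true}}_j = Q_j\bigl(T^{\text{impl.}}_j Y_j + Y_j (T^{\text{impl.}}_j)^*\bigr)Q_j^* + \bigl(g_j h_{j+1,j} e_j^* - S_j\bigr) H_j^{-1} Y_j Q_j^* + \text{adjoint} + BB^*,
\]
and the same cancellation $Q_jQ_j^*BB^*Q_jQ_j^* = BB^*$ gives $\cR^{\text{true}}_j = F^{\text{impl.}}_j + (F^{\text{impl.}}_j)^*$ with $F^{\text{impl.}}_j = F_j - S_j H_j^{-1} Y_j Q_j^*$.

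There is no real obstacle here; the argument is essentially the standard derivation of \eqref{exactRKres} carried through carefully with the extra $S_j$-term. The only subtlety worth flagging is the appearance of the projector $(I - Q_jQ_j^*)$ in case (a) but not in case (b): it arises because defining the projected problem through $T^{\text{expl.}}_j$ effectively absorbs $Q_j^* S_j H_j^{-1}$ into the projection (see \eqref{inexactProj}), so only the orthogonal-to-$\range{Q_j}$ component of $S_j$ survives in the residual. In case (b), by contrast, the full $S_j$ remains, which is consistent with the fact that $T^{\text{impl.}}_j$ is not the true Galerkin restriction.
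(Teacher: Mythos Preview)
Your proof is correct and follows essentially the same approach as the paper: substitute the appropriate inexact rational Arnoldi relation (\eqref{inexactRatArnDecomp2} for (a), \eqref{inexactRatArnDecomp1} for (b)) into the residual definition, use $Q_jQ_j^*B=B$ to rewrite $BB^*=Q_jQ_j^*BB^*Q_jQ_j^*$, and cancel via the projected Lyapunov equation. Your explicit justification of the identity $Q_jQ_j^*BB^*Q_jQ_j^*=BB^*$ and the remark on why the projector $(I-Q_jQ_j^*)$ appears only in case (a) are helpful additions that the paper leaves implicit.
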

\begin{proof}
For case (a), using~\eqref{inexactRatArnDecomp2} immediately yields
 \begin{align*}
\cR^{\text{true}}_j &= \cR(Q_jY_j Q_j^*)=AQ_jY_jQ_j^*+Q_jY_jQ_j^*A^*+BB^*\\
&=[Q_jT^{\text{expl.}}_j+\tg_jH_j^{-1}]Y_jQ_j^*+Q_jY_j[Q_jT^{\text{expl.}}_j+\tg_jH_j^{-1}]^*+Q_jQ_j^*BB^*Q_jQ_j^*\\
&=Q_j\left[T^{\text{expl.}}_jY_j+Y_j(T^{\text{expl.}}_j)^*+Q_j^*BB^*Q_j\right]Q_j^*+\tg_jH_j^{-1}Y_jQ_j^*+Q_jY_jH_j^{-*}\tg^*_j=F^{\text{expl.}}_j+(F^{\text{expl.}}_j)^*.
\end{align*}
Case (b) follows similarly using~\eqref{inexactRatArnDecomp1}.
\end{proof}
Hence in both cases the true residual $\cR^{\text{true}}_j$ is a perturbation of the computed residual given
by~\eqref{exactRKres_R}-\eqref{exactRKres_F} which we denote in the remainder by $\cR^{\text{comp.}}_j$. 
In case $T^{\text{expl.}}_j$ is used, since $Q_j\perp \tg_j$ one can easily see that $\|\cR^{\text{true}}_j\|=\|F^{\text{expl.}}_j\|=\|\tg_jH_j^{-1}Y_j\|$, a property not
shared when using $T^{\text{impl.}}_j$. 

Our next step is to analyze the difference between the true and computed residual. 
\begin{defin}
The \textit{residual gap} after $j$ steps of inexact RKSM for Lyapunov equations is given by
\begin{align*}
	\Delta\cR_j&:=\cR_j^{\text{true}}-\cR^{\text{comp.}}_j = \eta^{\text{expl./impl.}}_j+(\eta^{\text{expl./impl.}}_j)^*,\\
	\text{where}\quad \eta^{\text{expl.}}_j&:=F_j-F^{\text{expl.}}_j=(I-Q_jQ_j^*)S_jH_j^{-1}Y_jQ_j^*\quad(\text{if}~T^{\text{expl.}}_j~\text{is used}),\\
	\eta^{\text{impl.}}_j&:=F_j-F^{\text{impl.}}_j=S_jH_j^{-1}Y_jQ_j^*\quad(\text{if}~T^{\text{impl.}}~\text{is used}).
\end{align*}
\end{defin}
We have
\begin{align*}
\|\Delta\cR_j\|=\begin{cases}
\|\eta^{\text{expl.}}_j+(\eta^{\text{expl.}}_j)^*\|&=\|\eta^{\text{expl.}}_j\|,\\
\|\eta^{\text{impl.}}_j+(\eta^{\text{impl.}}_j)^*\|&\leq 2\|\eta^{\text{impl.}}_j\|.
\end{cases}
\end{align*}
The result for $\|\eta^{\text{expl.}}_j\|$ follows from the orthogonality of left and rightmost factors of $\eta^{\text{expl.}}_j$:
with $\hQ:=[Q_j,Q_j^{\perp}]\in\Cnn$ unitary we have
\begin{align*}
\|\Delta\cR_j\|_2&=\vert\lambda_{\max}(\eta^{\text{expl.}}_j+(\eta^{\text{expl.}}_j)^*)\vert=\vert\lambda_{\max}\left(\hQ^*(\eta^{\text{expl.}}_j+(\eta^{\text{expl.}}_j)^*)\hQ\right)\vert\\
&=\vert\lambda_{\max}(\smb 0&{Q_j^{\perp}}^*\eta^{\text{expl.}}_jQ_j\\({Q_j^{\perp}}^*\eta^{\text{expl.}}_jQ_j)^*&0\sme)\vert=\sigma_{\max}(\eta^{\text{expl.}}_j)=\|\eta^{\text{expl.}}_j\|_2.
\end{align*}
In the following we use $T^{\text{expl.}}_j$ to define the projected problem and omit the superscripts $^{\text{expl.}}$, $^{\text{impl.}}$.

Suppose the desired accuracy is so that $\|\cR^{\text{true}}_j\|\leq\varepsilon$, where $\varepsilon>0$ is a given threshold. 
In practice the computed residual
norms often show a decreasing behavior very similar to the exact method. However, the norm of the residual gap $\|\eta_j\|$ indicates the attainable accuracy of
the inexact rational Arnoldi method because
$\|\cR^{\text{true}}_j\|\leq\|\cR^{\text{comp.}}_j\|+\|\eta_j\|$ and the true residual norm is bounded by $\|\eta_j\|$ even if
$\|\cR^{\text{comp.}}_j\|\leq\varepsilon$, which would indicate convergence of the computed residuals. This motivates to enforce $\|\eta_j\|<\varepsilon$, such
that small
true residual norms $\|\cR^{\text{true}}_j\|\leq 2\varepsilon$ are obtained overall. Since, at step $j$,
\begin{align}\label{rksmgap:bound1}
\|\eta_j\|&\leq \|S_jH_j^{-1}Y_j\|=\|\sum\limits_{k=1}^js_ke_k^*H_j^{-1}Y_j\|\leq\sum\limits_{k=1}^j\|s_k\|\|e_k^*H_j^{-1}Y_j\|,
\end{align}
it is sufficient that only one of the factors in each addend in the sum is small and the other one is bounded by, say,
 $\cO(1)$ in order to achieve $\|\eta_j\|\leq\varepsilon$. In particular, if the $\|e_k^*H_j^{-1}Y_j\|$ terms decrease with $k$, the linear residual norms
$\|s_k\|$ are
allowed to increase to some extent, and still achieve a small residual gap $\|\eta_j\|$. Hence, the solve tolerance $\tau^{\text{ls}}_k$
 of the linear solves can be relaxed in the course of the outer iteration which has coined the term \textit{relaxation}. For this to happen, however, we first
need to investigate if there is a decay of $\|e_k^*H_j^{-1}Y_j\|$ as $k$ increases.
\subsection{Properties of the solution of the Galerkin system}
\label{sec:props}
By~\eqref{rksmgap:bound1}, the norm of the residual gap $\eta_j$ strongly depends on  
the solution  $Y_j$ of the projected Lyapunov equation~\eqref{projlyap}. We will see in Theorem~\ref{th:deltaybound} and Corollary~\ref{corYentrybound} that the
entries of $Y_j$ decrease
away from the diagonal in a manner proportional to the Lyapunov residual norm. 

In the second part of this section we consider the rows of $H_j^{-1}Y_j$, since the residual formula
(\ref{exactRKres_R}-\ref{exactRKres_F}) and the residual gap~\eqref{rksmgap:bound1} depend on
this quantity. It turns out that the norm of those rows also decay with the Lyapunov residual norms. Both decay bounds will later be used to develop practical
relaxation criteria for achieving $\|\eta_j\|\leq\varepsilon$.

Consider the solution to the projected Lyapunov equation~\eqref{projlyap}. We are interested in the transition from step $k$ to $j$, where $k<j$. At first,
we investigate this transition for a general Galerkin method including RKSM as a special case.
\begin{theorem}\label{th:deltaybound}
Assume $A+A^*<0$ and consider a Galerkin projection method for~\eqref{lyap} 
with $T_j=Q_j^*(AQ_j)$, $Q_j^*Q_j=I_j$, and the first basis vector given by $B=q_1\beta$.
Let the $k\times k$ matrix $Y_k$ and the $j\times j$ matrix $Y_j$ be the solution to the projected Lyapunov equation~\eqref{projlyap} after $k$ and $j$ steps of
this method (e.g., Algorithm~\ref{alg:rksm} with $r=1$), respectively, where $k<j$. Consider the $j\times j$ difference matrix $\Delta Y_{j,k}:=Y_{j}-\smb
Y_k&0\\0&0\sme$, where the
zero blocks are of appropriate size. Then 
\begin{align}\label{DYbound}
\|\Delta Y_{j,k}\|&\leq c_A\|\cR^{\text{true}}_k\|,\quad c_A:=\frac{(1+\sqrt{2})^2}{2\alpha_A},
\end{align}
where $\alpha_A:=\half\vert\lambda_{\min}(A+A^*)\vert$, and $\cR^{\text{true}}_k$ is the Lyapunov residual matrix after $k$ steps of Algorithm~\ref{alg:rksm}.
\end{theorem}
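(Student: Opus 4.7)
The plan is to recognise $\Delta Y_{j,k}$ as the exact solution of a small Lyapunov equation whose right-hand side can be identified with a compression of the true Lyapunov residual $\cR^{\text{true}}_k$, and then to invoke a standard field-of-values bound for Lyapunov equations.

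First, since the subspaces are nested, $Q_k$ consists of the first $k$ columns of $Q_j$, so $T_j = Q_j^*AQ_j$ admits the conformal partition
\[
T_j=\begin{pmatrix}T_k & T_{12}\\ T_{21} & T_{22}\end{pmatrix},\qquad \tB_j=\begin{pmatrix}\tB_k\\0\end{pmatrix}.
\]
I would substitute the zero-padded matrix $\tilde Y_k:=\begin{pmatrix}Y_k&0\\0&0\end{pmatrix}$ into the $j$-dimensional projected Lyapunov equation. Because $Y_k$ satisfies the $k$-dimensional projected equation, the $(1,1)$ block cancels and the residual collapses to
\[
T_j\tilde Y_k+\tilde Y_kT_j^*+\tB_j\tB_j^*=\begin{pmatrix}0&Y_kT_{21}^*\\T_{21}Y_k&0\end{pmatrix}=:R_k,
\]
which is Hermitian (using $Y_k=Y_k^*$). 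Subtracting this from the equation for $Y_j$ gives the Sylvester/Lyapunov equation for the discrepancy,
\[
T_j\,\Delta Y_{j,k}+\Delta Y_{j,k}\,T_j^*=-R_k.
\]

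Second, I would identify $R_k$ with a compression of $\cR^{\text{true}}_k=AQ_kY_kQ_k^*+Q_kY_kQ_k^*A^*+BB^*$. A direct calculation using $Q_k^*Q_j=(I_k,\,0)$ and $Q_j^*AQ_k=\begin{pmatrix}T_k\\T_{21}\end{pmatrix}$ yields $Q_j^*\cR^{\text{true}}_k Q_j=R_k$, whence $\|R_k\|\le\|\cR^{\text{true}}_k\|$ because compression by an isometry is non-expansive in the spectral norm.

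Third, I would apply a Lyapunov-solution bound to the equation for $\Delta Y_{j,k}$. Since $A+A^*\prec 0$, Rayleigh--Ritz yields $T_j+T_j^*=Q_j^*(A+A^*)Q_j\preceq-2\alpha_A I_j$, so the field of values $W(T_j)$ lies in the half-plane $\{\mathrm{Re}(z)\le-\alpha_A\}$. Writing $\Delta Y_{j,k}=\int_0^\infty e^{tT_j}R_k e^{tT_j^*}\,dt$ and using a Crouzeix-type bound $\|e^{tT_j}\|\le(1+\sqrt2)\,e^{-\alpha_A t}$ on each factor, one integrates to obtain
\[
\|\Delta Y_{j,k}\|\le\|R_k\|\int_0^\infty\|e^{tT_j}\|^2\,dt\le\frac{(1+\sqrt2)^2}{2\alpha_A}\|R_k\|\le c_A\,\|\cR^{\text{true}}_k\|,
\]
which is the desired inequality.

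The main obstacle is the constant. Under the hypothesis $A+A^*\prec 0$ the dissipativity estimate $\|e^{tT_j}\|\le e^{-\alpha_A t}$ actually holds \emph{without} the factor $1+\sqrt2$; the Crouzeix constant only becomes essential when one wishes the argument to use just the numerical range (which is all that survives in the inexact setting of Section~\ref{sec:RKSMinexact}, where $T^{\text{expl.}}_j$ is perturbed by $Q_j^*S_jH_j^{-1}$). Thus the challenge is less algebraic than conceptual: stating the bound in the numerical-range formulation that will be reused later, rather than in the tighter but less portable contractivity form.
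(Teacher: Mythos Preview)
Your proof is correct and follows essentially the same route as the paper: identify the difference $\Delta Y_{j,k}$ as the solution of a small Lyapunov equation with right-hand side $R_k=Q_j^*\cR^{\text{true}}_kQ_j$ (the paper calls this $N_{j,k}$ and leaves the block structure implicit), then apply the integral representation together with the Crouzeix--Palencia numerical-range bound on $\|e^{tT_j}\|$. Your closing remark that dissipativity alone already gives $\|e^{tT_j}\|\le e^{-\alpha_A t}$, so the factor $(1+\sqrt{2})^2$ is not strictly needed under the stated hypothesis, is a valid sharpening not made in the paper.
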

\begin{proof}
The residual matrix $N_{j,k}$ of~\eqref{projlyap} w.r.t. $T_j$ and $\smb Y_k&\\&0\sme$ is given by
\begin{align*}
N_{j,k}:=T_{j}\smb Y_k&\\&0\sme+\smb Y_k&\\&0\sme T_{j}^*+\smb\smb\beta\beta^*&0\\0&0\sme&0\\0&0\sme.
\end{align*}
as $T_j$ is built cumulatively. Since $Q_j\smb Y_k&\\&0\sme Q_j^*=Q_kY_kQ_k^*$, it follows that $\|N_{j,k}\|\leq \|\cR^{\text{true}}_k\|_2$.
The difference matrix $\Delta Y_{j,k}$ satisfies the Lyapunov equation
$T_{j}\Delta Y_{j,k}+\Delta Y_{j,k}T_{j}^*=-N_{j,k}$, and since $\Lambda(T_j)\subset\C_-$ it can be expressed via the integral
\begin{align}
\label{eq:dybound}
	\Delta Y_{j,k}&=\intab{0}{\infty}\expm{T_{j}t}N_{j,k}\expm{T^*_{j}t}dt.
\end{align}
Moreover, using results from~\cite{CroP17}, we have 
\[
\|\expm{T_{j}t}\|\leq (1+\sqrt{2})\max\limits_{z\in\cW(T_j)}\vert \expm{zt}\vert=(1+\sqrt{2})\expm{\max\limits_{z\in\cW(T_j)}\Real{z} t},
\]
where $\cW(\cdot)$ denotes the field of values.
Using the assumption $A+A^*<0$ it holds that $\cW(A)\in\C_-$ and, consequently, $\cW(T_{j})\subseteq\cW(A),~\forall j>0$. Hence,
\[
\expm{\max\limits_{z\in\cW(T_j)}\Real{z} t} \leq \expm{-\alpha_A t}\quad\text{with}\quad\alpha_A:=\half\vert\lambda_{\min}(A+A^*)\vert.
\]
Finally, from~\eqref{eq:dybound} we obtain $\|\Delta Y_{j,k}\|\leq \displaystyle\intab{0}{\infty}\|\expm{T_{j}t}\|^2dt \|N_{j,k}\| \leq
\displaystyle\frac{(1+\sqrt{2})^2\|\cR^{\text{true}}_k\|}{2\alpha_A}$.
\end{proof}
Theorem \ref{th:deltaybound} shows that the difference matrix $\Delta Y_{j,k}$ decays at a similar rate as the Lyapunov residual norms.
\begin{remark} Note that the constant $c_A$ in Theorem~\ref{th:deltaybound} is a bound on $\intab{0}{\infty}\|\expm{T_{j}t}\|^2dt$ and could be much smaller than the given value. In the following considerations we assume that $c_A$ is small (or of moderate size).
If $c_A$ is large (e.g. the field of values close to the imaginary axis) then the ability to approximate $X$ by a low-rank factorization might be declined, see, e.g.~\cite{Gra04,BakES15}. Hence, this situation could be difficult for the low-rank solvers (regardless of exact or inexact linear solves), and it is therefore  appropriate to assume that $c_A$ is small.
\end{remark}
The above theorem can be used to obtain results about the entries of the solution $Y_j$ of the projected Lyapunov equation~\eqref{projlyap}.
\begin{corollary}\label{corYentrybound} Let the assumptions of Theorem~\ref{th:deltaybound} hold. For the $(\ell,i)$-th entry of $Y_j$ we have 
\begin{align}\label{colYbound}
\vert e_{\ell}^*Y_je_i\vert \leq c_A\|\cR^{\text{true}}_{k}\|,\quad \ell,i=1,\ldots,j,
\end{align}
where $k< \max(\ell,i)$ and  $c_A$ is as in Theorem~\ref{th:deltaybound}.
\end{corollary}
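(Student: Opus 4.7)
The plan is to derive the entry bound directly from Theorem~\ref{th:deltaybound} by exploiting the block structure of $\smb Y_k & 0 \\ 0 & 0 \sme$.

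First I would observe that the zero blocks in the matrix $\smb Y_k & 0 \\ 0 & 0 \sme \in \C^{j \times j}$ cover exactly the positions $(\ell,i)$ with $\max(\ell,i) > k$. Therefore, under the assumption $k < \max(\ell,i)$, one has
\begin{align*}
e_\ell^* \smb Y_k & 0 \\ 0 & 0 \sme e_i = 0,
\end{align*}
so that $e_\ell^* Y_j e_i = e_\ell^* \Delta Y_{j,k}\, e_i$ by the very definition of $\Delta Y_{j,k}$.

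Next I would bound this scalar by the spectral norm via the submultiplicative inequality $|e_\ell^* \Delta Y_{j,k}\, e_i| \leq \|e_\ell\|\,\|\Delta Y_{j,k}\|\,\|e_i\| = \|\Delta Y_{j,k}\|$, using that the canonical basis vectors are of unit norm.

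Finally, applying Theorem~\ref{th:deltaybound} directly gives $\|\Delta Y_{j,k}\| \leq c_A \|\cR^{\text{true}}_k\|$, which yields the claimed entrywise bound $|e_\ell^* Y_j e_i| \leq c_A \|\cR^{\text{true}}_k\|$. I do not expect any real obstacle here: once the index-vs-block-structure observation is in place, the argument is a three-line chain of inequalities, and all the analytic work (the integral representation, the field-of-values estimate, the constant $c_A$) has already been absorbed into Theorem~\ref{th:deltaybound}.
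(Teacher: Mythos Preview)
Your proposal is correct and follows essentially the same approach as the paper's proof: decompose $e_\ell^* Y_j e_i$ via the definition of $\Delta Y_{j,k}$, observe that the $\smb Y_k & 0\\0&0\sme$-term vanishes at the relevant positions, and bound the remaining entry by $\|\Delta Y_{j,k}\|$ using Theorem~\ref{th:deltaybound}. The only cosmetic difference is that the paper first treats the case $k<\ell$ and then invokes the symmetry $Y_j=Y_j^*$ to obtain $k<\max(\ell,i)$, whereas you go directly to $\max(\ell,i)>k$ via the block structure; the content is identical.
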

\begin{proof}
We have 
\begin{align}
	e_{\ell}^*Y_je_i=e_{\ell}^*\smb Y_k&0\\0&0\sme e_i+e_{\ell}^*\Delta Y_{j,k}e_i.
\end{align}
For $k<\ell$ the first summand vanishes and hence, using Theorem~\ref{th:deltaybound}
\begin{align}
	\vert e_{\ell}^*Y_je_i\vert=\vert e_{\ell}^*\Delta Y_{j,k}e_i\vert \leq \frac{(1+\sqrt{2})^2}{2\alpha_A}\|\cR^{\text{true}}_k\|,\quad k=1,\ldots,\ell-1.
\end{align}
Since, $Y_j=Y_j^{*}$ the indices $\ell$, $i$ can be interchanged, s.t. $k< \max(\ell,i)$ and we end up with the final bound \eqref{colYbound}.
\end{proof}
\begin{remark}
Note that the sequence $\lbrace\|\cR^{\text{true}}_k\|\rbrace$ is not necessarily monotonically decreasing and we can further extend the bound in
\eqref{colYbound} and obtain
\[
\vert e_{\ell}^*Y_je_i\vert \leq c_A\min\limits_{p=1,\ldots,\max(\ell,i)}\|\cR^{\text{true}}_{p-1}\|,\quad \ell,i=1,\ldots,j.
\]
\end{remark}
Corollary~\ref{corYentrybound} shows that the $(\ell,i)$-th entry of $Y_j$ can be bounded from above using the Lyapunov residual norm at step $k<\max(\ell,i)$.
This indicates that the further away from the diagonal, the smaller the entries of $Y_j$ will become in the course of the iteration, provided the true
residual norms exhibit a sufficiently decreasing behavior. We can observe this characteristic in Figure~\ref{fig:decayY} in Section~\ref{sec:num}. The decay of
Lyapunov solutions has also been investigated by different approaches. Especially when
$T_j$ is banded, which is, e.g., the case when a Lanczos process is applied to $A=A^*$, more refined decay bounds for the entries of $Y_j$ can be established, see, e.g.,~\cite{CanSV14,PalS17}.

In exact and inexact RKSM, the formula for residuals~\eqref{exactRKres},~\eqref{inexactRKres_R} and residual gaps~\eqref{rksmgap:bound1}
depend not only on $Y_j$ but rather
on $H_j^{-1}Y_j$. In particular, the rows of $H_j^{-1}Y_j$ appear in~\eqref{rksmgap:bound1} and will later be
substantial for defining relaxation strategies. Hence, we shall investigate if the norms $\|e_k^*H_j^{-1}Y_j\|$ also exhibit a decay for $1\leq k\leq j$.
For the last row, i.e. $k=j$, using~\eqref{eq:compres} readily reveals
\begin{align*}
 \|e_j^*H_j^{-1}Y_j\|\leq \frac{\|\cR^{\text{comp.}}_j\|}{\vert h_{j+1,j}\vert\|g_j\|},\quad \text{assuming}\quad g_j\neq 0,\,h_{j+1,j}\neq 0.
\end{align*}
In the spirit of Theorem~\ref{th:deltaybound}, Corollary~\ref{corYentrybound}, we would like to bound the $\ell$th row of $H_j^{-1}Y_j$ by the $(\ell-1)$th
computed Lyapunov residual norm~$\|\cR^{\text{comp.}}_{\ell-1}\|$. For this we require the following lemma showing that, motivated by similar results
in~\cite{EstS04}, the
first $k$ ($1< k\leq j$) entries of $e_k^*H_j^{-1}$ are essentially determined by the left null space of
$\underline{H}_j:=H_{j+1,j}$ and $e_{k-1}^*H_{k-1}^{-1}$ modulo scaling.
\begin{lemma}\label{Lem:ekinvHj}
 Let  $\underline{H}_j=\smb H_j\\h_{j+1,j}e_j^*\sme\in\C^{(j+1)\times j}$ with $H_j$ an unreduced upper Hessenberg matrix, with rank$(\underline{H}_j)=j$, $H_k:=H_{1:k,1:k}$ nonsingular $\forall 1\leq k\leq j$, and 
 let $\omega\in\C^{1\times (j+1)}$ satisfy $\omega \underline{H}_j=0$. Define the vectors
$f^{(k)}_m:=e_k^*H_m^{-1}$, $1\leq k,m\leq j$. Then, for $1\leq k\leq j$, we have 
\begin{subequations}
 \begin{align}\label{ekinHj_fjj}
 f^{(j)}_j&=-\frac{\omega_{1:j}}{\omega_{j+1}h_{j+1,j}},\quad
f^{(k)}_j=\frac{v^{(k)}_j}{\phi_{j}^{(k)}},\quad \text{where} \\\label{ekinHj_fjk}
  v^{(k)}_j&:=\omega_{1:j}+[0_{1,k},[0_{1,j-k-1},h_{j+1,j}\omega_{j+1}]H_{k+1:j,k+1:j}^{-1}],\quad\phi_{j}^{(k)}:=v^{(k)}_jH_{j}e_k.
\end{align}
Moreover, for $k>1$, the first $k$ entries of $v^{(k)}_j$ can be expressed by
 \begin{align}\label{ekinHj_fold}
  (v^{(k)}_j)_{1:k}&=[-h_{k,k-1}f^{(k-1)}_{k-1},1]\omega_k.
\end{align}
\end{subequations}
\end{lemma}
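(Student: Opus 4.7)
The plan is to exploit the left-null-space relation $\omega \underline{H}_j=0$ throughout. Splitting this column-wise gives $\omega_{1:j}H_j=-\omega_{j+1}h_{j+1,j}e_j^*$, and postmultiplication by $H_j^{-1}$ immediately yields $f_j^{(j)}=e_j^*H_j^{-1}=-\omega_{1:j}/(\omega_{j+1}h_{j+1,j})$, which is \eqref{ekinHj_fjj}. (Nonsingularity of $H_j$ together with $h_{j+1,j}\neq 0$ makes the division legitimate, and $\omega_{j+1}\neq 0$ follows from $\mathrm{rank}(\underline{H}_j)=j$ combined with nonsingularity of $H_j$, since otherwise $\omega_{1:j}H_j=0$ would force $\omega=0$.)

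For general $k<j$ the idea is to exhibit a row vector $v$ and a scalar $\phi$ with $vH_j=\phi\,e_k^*$, so that postmultiplication by $H_j^{-1}$ delivers $f_j^{(k)}=v/\phi$. I would verify that the proposed candidate $v_j^{(k)}=\omega_{1:j}+[0_{1,k},u']$, where $u':=[0_{1,j-k-1},h_{j+1,j}\omega_{j+1}]H_{k+1:j,k+1:j}^{-1}$, does this job by computing $v_j^{(k)}H_j$ in the $2\times 2$ partition of $H_j$ with blocks indexed by $\{1,\dots,k\}$ and $\{k+1,\dots,j\}$. Two things must line up: first, the off-diagonal block $H_{k+1:j,\,1:k}$ reduces, by the upper Hessenberg property, to $h_{k+1,k}e_1 e_k^*$, so $[0_{1,k},u']$ hitting this block deposits a single nonzero at position $k$ of size $h_{k+1,k}u'_1=\phi_j^{(k)}$; second, by construction $u'H_{k+1:j,k+1:j}=h_{j+1,j}\omega_{j+1}e_{j-k}^*$, which, padded back to length $j$, is exactly $h_{j+1,j}\omega_{j+1}e_j^*$ and precisely cancels the tail of $\omega_{1:j}H_j$. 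What remains is $v_j^{(k)}H_j=\phi_j^{(k)}e_k^*$, which proves \eqref{ekinHj_fjk} after dividing by $\phi_j^{(k)}$.

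For \eqref{ekinHj_fold} I first note that the correction $[0_{1,k},u']$ lives entirely in components $k+1,\dots,j$, so $(v_j^{(k)})_{1:k}=\omega_{1:k}$ directly from the construction. Expressing $\omega_{1:k}$ in the required form is then a short calculation: restricting $\omega\underline{H}_j=0$ to its first $k-1$ columns, and using that upper Hessenberg structure makes only the first $k$ rows of those columns nonzero, gives $\omega_{1:k}\underline{H}_{k-1}=0$, i.e.\ $\omega_{1:k-1}H_{k-1}=-\omega_k h_{k,k-1}e_{k-1}^*$. Postmultiplying by $H_{k-1}^{-1}$ yields $\omega_{1:k-1}=-\omega_k h_{k,k-1}f_{k-1}^{(k-1)}$, and combining with $(v_j^{(k)})_k=\omega_k$ produces \eqref{ekinHj_fold} exactly.

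The principal obstacle I anticipate is notational rather than conceptual: the definition of $v_j^{(k)}$ packages a Schur-complement-like construction inside fairly heavy subscript/slice notation, so the real work is carrying the block partition through cleanly and checking that the contributions from the $(2,1)$ and $(2,2)$ blocks combine with $\omega_{1:j}H_j$ to leave exactly $\phi_j^{(k)}e_k^*$ and nothing else.
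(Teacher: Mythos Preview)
Your proposal is correct and follows essentially the same approach as the paper: both exploit the block $2\times 2$ partition of $H_j$ along indices $\{1,\dots,k\}$ and $\{k+1,\dots,j\}$, the Hessenberg structure reducing $H_{k+1:j,1:k}$ to a single nonzero $h_{k+1,k}$, and the restriction of the null-space relation $\omega\underline{H}_j=0$ to leading columns. The only cosmetic difference is direction: the paper starts from $f_j^{(k)}H_j=e_k^*$ and \emph{derives} the form $[\omega_{1:k},y]$ (thereby obtaining \eqref{ekinHj_fold} along the way), whereas you take the stated $v_j^{(k)}$ and \emph{verify} that $v_j^{(k)}H_j=\phi_j^{(k)}e_k^*$; the ingredients and computations are the same.
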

\begin{proof}
See Appendix~\ref{sec:appA}.
\end{proof}

\begin{corollary}
\label{cor:deltahybound}
Let the assumptions of Theorem~\ref{th:deltaybound}, Corollary~\ref{corYentrybound}, and Lemma~\ref{Lem:ekinvHj} hold and consider the RKSM as a special Galerkin
projection method. Let $\omega\in\C^{1\times (j+1)}$ be a unit
vector with $\omega \underline{H}_{j}=0$, where $\underline{H}_{j}=\smb H_j\\e_{j}^*h_{j+1,j}\sme$ is the upper Hessenberg matrix obtained after $j$ steps
of the rational
Arnoldi decomposition. Then
 \begin{align} 
\|e_{\ell}^*H_j^{-1}Y_j\|\leq\begin{cases}
c_A\|e_{1}^*H_j^{-1}\|\|\cR^{\text{true}}_{0}\|,&\ell=1,\\
\frac{1}{\phi_{j}^{(\ell)}\|g_{\ell-1}\|}\|\cR^{\text{comp.}}_{\ell-1}\|+c_A\|e_{\ell}^*H_j^{-1}\|\|\cR^{\text{true}}_{\ell-1}\|,& \ell=2,\ldots,j
\end{cases}
 \end{align}
with $g_{\ell}$ from~\eqref{exactRatArnDecomp} and $\phi_{j}^{(\ell)}$ from Lemma~\ref{Lem:ekinvHj}. 
\end{corollary}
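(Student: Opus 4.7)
The plan is to decompose $Y_j$ in the spirit of Theorem~\ref{th:deltaybound} by writing
\[
Y_j=\smb Y_{\ell-1}&0\\0&0\sme+\Delta Y_{j,\ell-1},
\]
so that
\[
e_\ell^* H_j^{-1}Y_j=e_\ell^* H_j^{-1}\smb Y_{\ell-1}&0\\0&0\sme+e_\ell^* H_j^{-1}\Delta Y_{j,\ell-1}.
\]
The second summand is immediately controlled by Cauchy--Schwarz and Theorem~\ref{th:deltaybound}, giving exactly the term $c_A\|e_\ell^* H_j^{-1}\|\|\cR^{\text{true}}_{\ell-1}\|$ claimed in the statement. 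For the base case $\ell=1$, the block-diagonal matrix with leading block $Y_0$ is the zero matrix, so only this second summand survives and the first case of the bound drops out directly.

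For $\ell\geq 2$ the first summand requires Lemma~\ref{Lem:ekinvHj}. The product with $\smb Y_{\ell-1}&0\\0&0\sme$ only accesses the first $\ell-1$ entries of the row $e_\ell^* H_j^{-1}=v_j^{(\ell)}/\phi_j^{(\ell)}$. Relation~\eqref{ekinHj_fold} identifies these entries as $-h_{\ell,\ell-1}\omega_\ell\,f_{\ell-1}^{(\ell-1)}/\phi_j^{(\ell)}$, where crucially $f_{\ell-1}^{(\ell-1)}=e_{\ell-1}^* H_{\ell-1}^{-1}$ is precisely the quantity that appears in the computed residual formula~\eqref{eq:compres} at step $\ell-1$. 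Inserting $\|e_{\ell-1}^*H_{\ell-1}^{-1}Y_{\ell-1}\|=\|\cR^{\text{comp.}}_{\ell-1}\|/(|h_{\ell,\ell-1}|\,\|g_{\ell-1}\|)$, the factor $|h_{\ell,\ell-1}|$ cancels, and $|\omega_\ell|\leq\|\omega\|=1$ absorbs the remaining scalar, yielding the stated contribution $\|\cR^{\text{comp.}}_{\ell-1}\|/(\phi_j^{(\ell)}\|g_{\ell-1}\|)$. Combining the two bounds via the triangle inequality finishes the proof.

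The chief technical obstacle is the careful bookkeeping needed to marry Lemma~\ref{Lem:ekinvHj} with the residual formula \eqref{eq:compres}: one must recognize that the left null-space structure of $\underline H_j$ propagates through $H_j^{-1}$ in a way that makes the first $\ell-1$ entries of its $\ell$-th row align exactly with the $(\ell-1)$-st row of $H_{\ell-1}^{-1}$, which in turn is exactly the row that generates the computed residual norm at the previous step. Everything else is straightforward norm bounding, but without this alignment the telescoping that produces $\|\cR^{\text{comp.}}_{\ell-1}\|$ would not occur.
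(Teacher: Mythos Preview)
Your proposal is correct and follows essentially the same route as the paper: split $Y_j=\smb Y_{\ell-1}&0\\0&0\sme+\Delta Y_{j,\ell-1}$, bound the second piece by Theorem~\ref{th:deltaybound}, and use Lemma~\ref{Lem:ekinvHj} (specifically~\eqref{ekinHj_fold}) to identify the first $\ell-1$ entries of $e_\ell^*H_j^{-1}$ with $-h_{\ell,\ell-1}\omega_\ell f_{\ell-1}^{(\ell-1)}/\phi_j^{(\ell)}$ and then invoke the computed-residual formula~\eqref{eq:compres}. The only cosmetic difference is that the paper expresses the first term via the semi-residual $L_{\ell-1}$ from~\eqref{exactRKres_F} before taking norms, whereas you go straight through~\eqref{eq:compres}; the two are equivalent.
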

\begin{proof}
 Using Lemma~\ref{Lem:ekinvHj} for $f_j^{(\ell)}:=e_{\ell}^*H_j^{-1}$, $1<\ell\leq j$, and the structure~\eqref{exactRKres_F} of the computed Lyapunov residuals
yields
 \begin{align*}
  e_{\ell}^*H_j^{-1}Y_j&=f_j^{(\ell)}\left(\smb Y_{\ell-1}&0\\0&0\sme+\Delta Y_{j,\ell-1}\right)
=[-h_{\ell,\ell-1}f_{\ell-1}^{(\ell-1)}\omega_{\ell}]/\phi_{j}^{(\ell)}Y_{\ell-1}+f_j^{(\ell)}\Delta Y_{j,\ell-1}\\
   &=-\frac{g_{\ell-1}^*L_{\ell-1}}{\|g_{\ell-1}\|^2\phi_{j}^{(\ell)}}\omega_{\ell}+f_j^{(\ell)}\Delta Y_{j,\ell-1}.
 \end{align*}
Taking norms, using that $\omega$ is a unit norm vector, noticing for $\ell=1$ only the second term exists, and applying Theorem~\ref{th:deltaybound} gives the
result.
\end{proof}
Corollary \ref{cor:deltahybound} shows that, similar to the entries of $Y_j$, the rows of $H_j^{-1}Y_j$ can be bounded using the previous Lyapunov residual norm. However, due to the influence of
$H_j^{-1}$, the occurring constants 
 in front of the Lyapunov residual norms can potentially be very large.

\subsection{Relaxation strategies and stopping criteria for the inner iteration}\label{relax:strat}

In order to achieve accurate results, the difference between the true and computed residual, the residual gap, needs to be small. 

Corollary~\ref{cor:deltahybound} indicates that $ \|e_k^* H_j^{-1}Y_j\|$ decreases with the computed Lyapunov residual, and hence $\|s_k\|$ may be relaxed
during the RKSM iteration in a manner inverse proportional to the norm of the Lyapunov residual (assuming the Lyapunov residual norm decreases). 
\begin{theorem}[Theoretical relaxation strategy in RKSM]
\label{th:relax_theo}
Let the assumptions of Theorem~\ref{th:deltaybound} and Corollary~\ref{cor:deltahybound} hold.
Assume we carry out $j$ steps of Algorithm~\ref{alg:rksm} always using the explicit projection $T^{\text{expl.}}_j$. 
If we choose the solve tolerances $\|s_k\|$, $1\leq k\leq j$ for the inexact
solves within RKSM such that
\begin{align}\label{bound_sk_theo}
 \|s_k\|\le\tau_k^{\text{LS}}=\begin{cases}
\tfrac{\varepsilon}{jc_A\|e_{1}^*H_j^{-1}\|\|\cR^{\text{true}}_{0}\|},&k=1,\\          
\tfrac{\varepsilon}{\frac{j}{\phi_{j}^{(k)}\|g_{k-1}\|}\|\cR^{\text{comp.}}_{k-1}\|+jc_A\|e_{k}^*H_j^{-1}\|\|\cR^{\text{true}}_{k-1}\|},&k>1,\\
           \end{cases}
\end{align}
with the same notation as before, then, for the residual gap $\|\eta_j\|\leq\varepsilon$ holds. 
\end{theorem}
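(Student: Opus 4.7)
The plan is to start from the pointwise estimate on the residual gap already derived in equation~\eqref{rksmgap:bound1}, namely
\[
\|\eta_j\|\leq \sum_{k=1}^{j}\|s_k\|\,\|e_k^{*}H_j^{-1}Y_j\|,
\]
and then bound each factor $\|e_k^{*}H_j^{-1}Y_j\|$ using Corollary~\ref{cor:deltahybound}. Splitting the required budget of $\varepsilon$ evenly across the $j$ terms of the sum leads to the demand that every summand be at most $\varepsilon/j$, which precisely fixes how small each $\|s_k\|$ must be.

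More concretely, first I would treat the case $k=1$ separately: Corollary~\ref{cor:deltahybound} gives $\|e_1^{*}H_j^{-1}Y_j\|\le c_A\|e_1^{*}H_j^{-1}\|\|\cR^{\text{true}}_0\|$, so requiring $\|s_1\|$ to be at most the reciprocal of $j$ times this quantity (as in the first case of~\eqref{bound_sk_theo}) ensures the first summand is $\le \varepsilon/j$. For $2\le k\le j$, the same corollary provides the two-term bound $\tfrac{1}{\phi_j^{(k)}\|g_{k-1}\|}\|\cR^{\text{comp.}}_{k-1}\|+c_A\|e_k^{*}H_j^{-1}\|\|\cR^{\text{true}}_{k-1}\|$ on $\|e_k^{*}H_j^{-1}Y_j\|$, and the choice of $\tau_k^{\text{LS}}$ in~\eqref{bound_sk_theo} is exactly $\varepsilon/j$ divided by this bound. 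Summing over $k=1,\ldots,j$ then yields $\|\eta_j\|\le j\cdot\varepsilon/j=\varepsilon$, as claimed.

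The proof itself is essentially a bookkeeping argument: triangle inequality plus substitution of the decay bound from Corollary~\ref{cor:deltahybound} into each term of the sum~\eqref{rksmgap:bound1}. The only subtlety that needs to be acknowledged is that the quantities $H_j^{-1}$, $\phi_j^{(k)}$ and $\omega$ entering the bound~\eqref{bound_sk_theo} depend on the full upper Hessenberg matrix produced after $j$ steps, so the bound is genuinely a-posteriori in $j$ and cannot be evaluated on the fly during step $k<j$. This is why the theorem is labelled \emph{theoretical} — it justifies \emph{that} relaxation is permissible, while the practical, step-by-step strategy is postponed to the subsequent development. I would close the proof by pointing out this point explicitly, so the reader understands that the bound serves as a benchmark against which the implementable heuristics of Section~\ref{relax:strat} are calibrated.

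The main obstacle, if there is one, is not algebraic but conceptual: one must be careful that the split $\varepsilon/j$ is a choice (any positive weights summing to $\varepsilon$ would work), and that the two-term nature of the $k>1$ bound forces $\tau_k^{\text{LS}}$ to be defined via the reciprocal of the \emph{sum} rather than via two separate conditions — otherwise one either doubles the budget or tightens $\tau_k^{\text{LS}}$ unnecessarily. Beyond this, the derivation is a direct substitution and requires no new estimate.
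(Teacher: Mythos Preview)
Your proposal is correct and follows essentially the same route as the paper: start from the bound~\eqref{rksmgap:bound1}, apply Corollary~\ref{cor:deltahybound} to each term $\|e_k^{*}H_j^{-1}Y_j\|$ (treating $k=1$ and $k>1$ separately), and choose $\|s_k\|$ so that each summand is at most $\varepsilon/j$. Your additional remarks about the a-posteriori nature of the bound and the uniform splitting of $\varepsilon$ mirror the discussion the paper places immediately after the proof.
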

\begin{proof}
Consider a single addend in the sum expression~\eqref{rksmgap:bound1} for the norm of the residual gap
\begin{align*}
 \|s_k\| \|e_k^*
H_j^{-1}Y_j\|\leq\|s_k\|\left(\frac{1}{\phi_{j}^{(k)}\|g_{k-1}\|}\|\cR^{\text{comp.}}_{k-1}\|+c_A\|e_{k}^*H_j^{-1}\|\|\cR^{\text{true}}_{\ell-1}
\|\right), k>1.
\end{align*} 
Choosing $s_k$ such that~\eqref{bound_sk_theo} is satisfied for $1\leq k\leq j$ then gives $\|\eta_j\|\leq\displaystyle\sum\limits_{k=1}^j\frac{\varepsilon}{j}=\varepsilon$
where we have used \eqref{rksmgap:bound1}, Theorem~\ref{th:deltaybound}, Corollaries~\ref{corYentrybound} and~\ref{cor:deltahybound}.
\end{proof}
The true norms $\|\cR^{\text{true}}_{k-1}\|$ can be estimated by 
$\|\cR^{\text{true}}_{k-1}\|\leq\|\cR^{\text{comp.}}_{k-1}\|+\|\eta_{k-1}\|$. For this, we might either use some estimation for $\|\eta_{k-1}\|$ or simply
assume that all previous residual gaps were sufficiently small, i.e., $\|\eta_{k-1}\|\leq \varepsilon$.

\paragraph{Practical relaxation strategies for inexact RKSM}
The relaxation strategy in Theorem~\ref{th:relax_theo} is far from practical. First, the established bounds for the entries and rows of $Y_j$ and $H_j^{-1}Y_j$, respectively, can be a vast overestimation of the true norms. Hence, the potentially large denominators
in~\eqref{bound_sk_theo} result in very small solve tolerances~$\tau_k^{\text{LS}}$ and, therefore, prevent a substantial relaxation of the inner solve
accuracies. Second, several quantities in the used bounds are unknown at step $k<j$, e.g. $H_j^{-1}$, $Y_j$, and the constants~$\phi_{j}^{(k)}$.
If we knew $H_j^{-1}$, $Y_j$, we could employ a relaxation strategy of the form $\|s_k\|\le \frac{\varepsilon}{j \|e_k^* H_j^{-1}Y_j\|}$  
 and only use Corollary~\ref{cor:deltahybound} as theoretical indication that $\|e_k^* H_j^{-1}Y_j\|$ decreases as the outer method converges.

In the following we therefore aim to develop a more practical relaxation strategy by trying to estimate the relevant quantity $\|e_k^* H_j^{-1}Y_j\|$
differently using the most recent 
available data. Suppose an approximate solution with residual norm $\|\cR^{\text{true}}_{k}\|\leq\varepsilon$,  $0<\varepsilon\ll 1$ is desired which should be
found after at most $\jmax$ rational Arnoldi steps. This goal is achieved if $\|\eta_{\jmax}\|\leq\tfrac{\varepsilon}{2}$ and if
$\|\cR^{\text{comp.}}_{\jmax}\|\leq\tfrac{\varepsilon}{2}$ is obtained by the inexact RKSM.

Consider the left null vectors of the augmented Hessenberg matrices, $\omega_{k}\underline{H}_{k}=0$, $k\leq \jmax$. It is easy to show that $\omega_{k}$ can
be updated recursively, in particular it is possible to compute $\omega_{m}=\omega_{k}(1:m+1)$, $m\leq k\leq\jmax$. Consequently, at the beginning of step $k$ 
we already have $\underline{H}_{k-1}$ and hence, also the first $k$ entries of $\omega_{\jmax}$ without knowing the full matrix
$\underline{H}_{\jmax}$.
Using Lemma~\ref{Lem:ekinvHj} and proceeding similar as in the proof of Corollary~\ref{cor:deltahybound} results in
\begin{align*}
 \|e_k^* H_{\jmax}^{-1}Y_{\jmax}\|&=\|e_k^* H_{\jmax}^{-1}(\smb Y_{k-1}&0\\0&0\sme+\Delta
Y_{{\jmax},\ell-1})\|\\
&=\left\|-\frac{\omega_{\jmax}(k)}{\phi_{\jmax}^{(k)}}[h_{k,k-1}e_{k-1}^*H_{k-1}^{-1},*]\smb Y_{k-1}&0\\0&0\sme+e_k^* H_{\jmax}^{-1}\Delta
Y_{{\jmax},\ell-1})\right\|\\
&=\left\|-\frac{h_{k,k-1}\omega_k(k)}{\phi_{\jmax}^{(k)}}e_{k-1}^*H_{k-1}^{-1}Y_{k-1}+e_k^* H_{\jmax}^{-1}\Delta Y_{{\jmax},k-1}\right\|\\
 &\leq
\left|\frac{h_{k,k-1}\omega_k(k)}{\phi_{\jmax}^{(k)}}\right|\|e_{k-1}^*H_{k-1}^{-1}Y_{k-1}\|+\|e_k^*H_{\jmax}^{-1}\|c_A\|\cR^{\text{true}}_{k-1}\|\\
 &\approx \left|\frac{h_{k,k-1}\omega_k(k)}{\phi_{\jmax}^{(k)}}\right|\|e_{k-1}^*H_{k-1}^{-1}Y_{k-1}\|,
\end{align*}
if $\|e_k^*H_{\jmax}^{-1}\|c_A\|\cR^{\text{true}}_{k-1}\|$ is small. Only the scaling parameter $\phi_{\jmax}^{(k)}$ contains missing
data at the beginning of step $k$, because $H_{k-1},~Y_{k-1}$, $\omega_k$ are known from the previous step. 
We suggest to omit the unknown data and use the following \emph{practical relaxation strategy}
\begin{subequations}\label{relax_sk_prac}
\begin{align}\label{relax_sk_prac1}
 \|s_k\|\le\tau_k^{\text{LS}}=
 \begin{cases}
\delta\frac{\varepsilon}{\jmax},&k=1,\\
\delta\frac{\varepsilon}{\jmax \|h_{k,k-1}e_{k-1}^*H_{k-1}^{-1}Y_{k-1}\|},&k>1,
\end{cases}
\end{align}
where $0<\delta\leq1$ is a safeguard constant to accommodate for the estimation error resulting from approximating $\|e_k^* H_{\jmax}^{-1}Y_{\jmax}\|$ and
omitting
unknown quantities (e.g., $\phi_{\jmax}^{(k)}$). 

The reader might notice by following Algorithm~\ref{alg:rksm} closely, that the built up subspace at the beginning of iteration step $k\leq \jmax$ is already
$k$-dimensional and
since we are using the explicit projection
$T^{\text{expl.}}_k$ to define the Galerkin systems, we can already compute $Y_k$ directly after building $Q_k$. This amounts to a simple rearrangement of
Algorithm~\ref{alg:rksm} by moving Lines~\ref{rksmAj},~\ref{solveY} before Line~\ref{rksmLGS}.
Hence, the slight variation 
\begin{align*}
 \|e_k^* H_{\jmax}^{-1}Y_{\jmax}\|\approx\left|\frac{\omega_{k}(k)}{\phi_{\jmax}^{(k)}}\right|\|[-h_{k,k-1}e_{k-1}^*H_{k-1}^{-1},1]Y_k\|
\end{align*}
of the above estimate is obtained, which suggests the use of the (slightly different) \emph{practical relaxation strategy}
\begin{align}\label{relax_sk_prac2}
 \|s_k\|\le\tau_k^{\text{LS}}=
 \begin{cases}
\delta\frac{\varepsilon}{\jmax},&k=1,\\
\delta\frac{\varepsilon}{\jmax \|[-h_{k,k-1}e_{k-1}^*H_{k-1}^{-1},1]Y_k\|},&k>1.
\end{cases}
\end{align}
\end{subequations}

For both dynamic stopping criteria, in order to prevent too inaccurate and too accurate linear solves, it is reasonable to enforce
$\tau_k^{\text{LS}}\in[\tau^{\text{LS}}_{\min},\tau^{\text{LS}}_{\max}]$, where $0<\tau^{\text{LS}}_{\min}<\tau^{\text{LS}}_{\max}\leq 1$ indicate minimal and
maximal linear solve thresholds. 

The numerical examples in Section \ref{sec:num} show that these practical relaxation strategies are effective and can reduce the number of inner iterations for RKSM by up to 50 per cent.

\subsection{Implementation issues and generalizations}\label{ssec:imple_gen}

This section contains several remarks on the implementation of the inexact RKSM, in particular the case when the right hand side of the Lyapunov equation has
rank greater than one, as well as considerations of the inner iterative solver and preconditioning. We also briefly comment on extensions to generalized
Lyapunov equations and algebraic Riccati equations.

\paragraph{The case $r>1$}
The previous analysis was restricted to the case $r=1$ but  the block situation, $r>1$, can be handled similarly by using appropriate block versions of the 
 relevant quantities, e.g., $q_k,~w_k,~s_k\in\Cnr$, $h_{ij}\in\Crr$, $\omega\in\C^{r\times (j+1)r}$, and $e_k$ by $e_k\otimes I_r$, as well as replacing
absolute values by spectral norms in the right places. When solving the linear system with $r$ right hand sides $q_k$, such that, $\|s_k\|\leq\tau_k^{\text{LS}}$
is achieved, one can either used block variants of the iterative methods (see, e.g.,~\cite{Sood15}), or simply apply a single vector method and sequentially consider every column
$q_k(:,\ell)$, $\ell=1,\ldots,r$ and demand that $s_k(:,\ell)\leq\tau_k^{\text{LS}}/r$.
\paragraph{Choice of inner solver}
One purpose of low-rank solvers for large matrix equations is to work in a memory efficient way. Using a long-recurrence method such as GMRES to solve unsymmetric inner linear systems defies this purpose in some sense because it requires storing the full Krylov basis.
Unless a very good preconditioner is available, this can lead to significant additional storage requirements within the inexact low-rank method, where the
Krylov method consumes more memory than the actual low-rank Lyapunov solution factor of interest. Therefore we exclusively used short-recurrence Krylov methods
(e.g., BiCGstab) for the numerical examples defined by an unsymmetric matrix $A$.
\paragraph{Preconditioning}
The preceding relaxation strategies relate to the residuals $s_k$ of the underlying linear systems. For enhancing the performance of the Krylov subspace methods, using preconditioners is typically inevitable. Then the inner iteration itself inherently only works with the preconditioned residuals $s^{\text{Prec.}}_k$ which, if left or two-sided preconditioning is used, are different from the residuals $s_k$ of the original linear systems. Since $\|s^{\text{Prec.}}_k\|\leq \tau_k^{\text{LS}}$ does not imply $\|s_k\|\leq \tau_k^{\text{LS}}$ this can result in linear systems solved not accurately enough to ensure small enough Lyapunov residual gaps.
Hence, some additional care is needed to respond to these effects from preconditioning. The obvious approach is to use right preconditioning which gives
$\|s^{\text{Prec.}}_k\|=\|s_k\|$.
\paragraph{Complex shifts}
In practice $A$, $B$ are usually real but some of the shift parameters can occur in complex conjugate pairs. 
Then it is advised to reduce the amount of complex operations by working with a real rational Arnoldi method~\cite{Ruh94c} that constructs a real rational
Arnoldi decomposition and slightly modified formulae for the computed Lyapunov residuals, in particular for $F_j$. 
The actual derivations are tedious and are omitted here for the sake of brevity, but our implementation for the numerical experiments works exclusively with the real Arnoldi process.  
\paragraph{Generalized Lyapunov equations}
Often, generalized Lyapunov equations of the form
\begin{align}\label{glyap}
 AXM^*+MXA^*=-BB^*,
\end{align}
with an additional, nonsingular matrix $M\in\Rnn$ have to be solved.
Projection methods tackle~\eqref{glyap} by implicitly running on equivalent Lyapunov equations defined by 
$A_M:=L_M^{-1}AU_M^{-1}$, $B_M:=L_M^{-1}B$ using a factorization $M=L_MU_M$, which could be a LU-factorization or, if $M$ is
positive definite, a Cholesky factorization ($U_M=L_M^*$). Other
possibilities are $L_M=M,~U_M=I$ and $L_M=I,~U_M=M$. 
Basis vectors for the projection subspace are obtained by 
\begin{align}
 Q_1\beta&=L_M^{-1}B,\quad (A-\xi_{j+1}M)\hw_{j+1}=L_Mq_{j},\quad w_{j+1}=U_M\hw_{j+1}.
\end{align}
After convergence, the low-rank approximate
solution of the original problem~\eqref{glyap} is given by $X\approx (U_M^{-1}Q_j)Y_j(U_M^{-1}Q_j)^*)$, where $Y_j$ solves the reduced Lyapunov equation
defined by the restrictions of $A_M$ and $B_M$.
This requires solving extra linear systems defined by (factors of) $M$ in certain stages of Algorithm~\ref{alg:rksm}: setting up the
first
basis vector, building the restriction $T_j$ of $A_M$ (either explicitly or implicitly using~\eqref{exactProj}), and recovering the approximate solution after
termination.  
Since the coefficients of these linear system do not change throughout the iteration, often sparse factorizations of $M$ are computed once at the beginning
and
reused every time they are needed. In this case the above analysis can be applied with 
minor changes of the form
\begin{align*}
s_j:=L_Mq_j-(A-\xi_{j+1} M)\hat\tw_{j+1},\quad \|s_j\|\leq \tau^{\text{LS}}_j,\quad w_{j+1}=U_M\hat\tw_{j+1}, 
\end{align*}
for the inexact linear solves.  We obtain an inexact rational Arnoldi decomposition with respect to $A_M$ of the form
\begin{align*}
 A_MQ_j&=Q_j\hT^{\text{expl.}}_j+\hg_je_j^*h_{j+1,j}H_j^{-1}-(I-Q_jQ_j^*)L_M^{-1}S_jH_j^{-1},\\
 \text{with}\quad\hT^{\text{expl.}}_j&=Q_j^*A_MQ_j,\quad \hg_j=q_{j+1}\xi_{j+1}-(I-Q_jQ_j^*)A_M q_{j+1}.
\end{align*}
If $Q_jY_jQ_j^*$ is an approximate solution of the equivalent Lyapunov equation defined by $A_M,~B_M$, and $Y_j$ solves the reduced Lyapunov equation
defined by $\hT^{\text{expl.}}_j$ and $Q_j^*B_M$, then the associated residual is
\begin{align*}
 A_MQ_jY_jQ_j^*+Q_jY_jQ_j^*A_M^*+B_MB_M^*&=\hF_j+\Delta \hF_j+(\hF_j+\Delta \hF_j)^*,\\
 \hF_j:=\hg_j^*h_{j+1,j}H_j^{-1}Y_jQ_j^*,\quad \Delta \hF_j&:=(I-Q_jQ_j^*)L_M^{-1}S_jH_j^{-1}Y_jQ_j^*.
\end{align*}
Hence, the generalized residual gap is $\hat\eta_j=\Delta \hF_j$. If $L_M=I$, $U_M=M$, bounding $\|\hat\eta_j\|$ works in the same way as in the case $M=I$,
otherwise an additional constant $1/\sigma_{\min}(L_M)$ (or an estimation thereof) has to be multiplied to the established bounds.
Allowing inexact solves of the linear systems defined by (factors of) $M$ substantially complicates the analysis. In particular, the transformation to a
standard Lyapunov equation is essentially not given exactly since, in that case, only a perturbed version of $A_M$ and its restriction are available. This
situation is, hence, similar to the case when no exact matrix vector products
with $A$ are available. If $L_M\neq I$, also $B_M$ is not available exactly leading to further perturbations in
the basis generation. 
For these reasons, we will not further pursue inexact solves with $M$ or its factors. This is also motivated from practical situations,
where solving with $M$, or computing a sparse factorization thereof, is usually much less demanding compared to factorising $A-\xi_{j+1} M$.
\paragraph{Extended Krylov subspace methods}
A special case of the rational Krylov subspace appears when only the shifts zero and infinity are used, leading to the extended Krylov subspace 
$\cE\cK_k(A_M,B_M)=\cK_k(A_M,B_M)\cup\cK_k(A^{-1}_M,A^{-1}_MB_M)$ (using the notation from the previous subsection). Usually, in the resulting extended Arnoldi process the basis is expanded by vectors from
$\cK_k(A_M,B_M)$ and $\cK_k(A^{-1}_M,A^{-1}_MB_M)$ in an alternating fashion, starting with $\cK_k(A_M,B_M)$.
The extended Krylov subspace method (EKSM)~\cite{Sim07} for~\eqref{lyap} and~\eqref{glyap} uses a Galerkin projection onto $\cE\cK_k(A_M,B_M)$.
In each step the basis is orthogonally expanded by $w_{j+1}=[A_Mq_j(:,1:r),A_M^{-1}q_j(:,r+1:2r)]$, where $q_j$ contains the last $2r$ basis vectors. This
translates to the following linear systems and matrix vector products
\begin{align*}
 U_Mz&=q_j(:,1:r),\quad L_Mw_{j+1}(:,1:r)=Az,\\
\quad\text{and}\quad Az=&L_Mq_j(:,r+1:2r),\quad w_{j+1}(:,r+1:2r)=U_Mz, 
\end{align*}
that have to be dealt with. Similar formula for the implicit restriction of $A$ and the Lyapunov residual as in RKSM can be found in~\cite{Sim07}.
Since these coefficient matrices do not change during the iteration, a very efficient strategy is to compute, if possible, sparse factorizations of
$A,M$ once before the algorithm and reuse them in every step. Incorporating inexact linear solves by using inexact
sparse
factorizations $A\approx \tL_A\tU_A$, $M\approx \tL_M\tU_M$ would make it difficult
to incorporate relaxed solve tolerances, since there is little reason to compute a less accurate factorization once a highly accurate one has been constructed.
For the same reasons stated in the paragraph above, we restrict ourselves to the iterative solution of the linear systems defined by $A$. These
linear systems affect only the columns in $w_{j+1}(:,r+1:2r)$. In particular, by proceeding as for inexact RKSM, one can show that
\begin{align*}
 A_MQ_j&=Q_{j+1}T_{j+1,j}^{\text{expl.}}-(I-Q_jQ_j^*)S_j^{\text{EK}}\quad\text{with}\\
 T_{j+1,j}^{\text{expl.}}&=Q_{j+1}^*A_MQ_j,\quad S_j^{\text{EK}}:=[s_1^{\text{EK}},\ldots,s_j^{\text{EK}}],\quad
s_i^{\text{EK}}:=[0,U_M^{-1}s_i]\in\C^{n\times 2r}, 
\end{align*}
where $s_i:=L_Mq_i(:,r+1:2r)-Az$, $1\leq i\leq j$. Note that, allowing inexact solves w.r.t. $M$ or even inexact matrix vector products with $A,~M$, would
destroy the zero block columns in $s_i^{\text{EK}}$.
\paragraph{Algebraic Riccati equations}
The RKSM in Algorithm~\ref{alg:rksm} can be generalized to compute low-rank approximate solutions of generalized algebraic Riccati equations (AREs)
\begin{align}\label{care}
 AXM^*+MXA^*-MXCC^*XM+BB^*=0,\quad C\in\Cnp,\quad p\ll n,
\end{align}
see, e.g.,~\cite{SimSM14,Sim16}. 
The majority of steps in Algorithm~\ref{alg:rksm} remain unchanged, the main difference is that the Galerkin system is now a reduced ARE 
\begin{align*}
 T_jY_j+Y_jT_j^*-Y_j(Q_j^*C_M)(C_M^*Q_j)Y_j+Q_j^*B_MB_M^*Q_j=0,\quad C_M:=U_M^{-*}C,
\end{align*}
which has to be solved. Since we do not alter the underlying rational Arnoldi process, most of the properties of the RKSM hold again, especially the
residual formulas in both the exact and inexact case, and a residual gap is defined again as in the Lyapunov case. Differences occur in the bounds
for the entries of $Y_j$ and rows of $H_j^{-1}Y_j$, since Theorem~\ref{th:deltaybound} cannot be formulated in the same way. Under some additional assumptions,
a bound of the form $\|\Delta Y_{j,j-1}\|\leq c_{\text{ARE}}\|\cR_{j-1}\|$ can be established~\cite{Sim16}, where the constant $c_{\text{ARE}}$ is different
from $c_A$. 
We leave concrete generalizations of Theorem~\ref{th:deltaybound}, Corollaries~\ref{corYentrybound},~\ref{cor:deltahybound} for future research and only show
in some experiments that relaxation strategies of the form~\eqref{relax_sk_prac1},~\eqref{relax_sk_prac2} also work for the inexact RKSM for AREs.

\section{The inexact low-rank ADI iteration}
\label{sec:adi}

\subsection{Derivation, basic properties, and introduction of inexact solves}
Using the Cayley transformation $\cC(A,\alpha):=(A+\alpha I)^{-1}(A-\overline{\alpha}
I)$, for $\alpha\in\C_-$,
~\eqref{lyap} can be reformulated as discrete-time Lyapunov equation (symmetric Stein equation)
\begin{align*}
	X=\cC(A,\alpha)X\cC(A,\alpha)^*-2\Real{\alpha}\cB(\alpha)\cB(\alpha)^*,~\cB(\alpha):=(A+\alpha I)^{-1}B.
\end{align*}
For suitably chosen $\alpha_j$ this motivates the iteration
\begin{align}\label{adi_iter1}
\begin{split}
 X_{j}
&=\cC(A,\alpha_j)X_{j-1}\cC(A,\alpha_j)^*-2\Real{\alpha_j}\cB(\alpha_j)\cB(\alpha_j)^*,
\end{split}
\end{align}
which is known as alternating directions implicit (ADI) iteration~\cite{Wac13} for Lyapunov equations. It converges for shift parameters $\alpha_j\in\C_-$ because $\rho(\cC(A,\alpha_j))<1$ and 
it holds~\cite{LiW02,Sab07,Wac13,morBenKS13}
\begin{align}\label{adi_error}
 X_j-X&=\cA_j(X_0-X)\cA_j^*,\\\label{lradi_res}
    \cR_j&=A X_j+X_jA^*+BB^*=\cA_j\cR_0\cA_j^*,
\end{align}
where $\cA_j:=\prod\limits_{i=1}^{j}\cC(A,\alpha_i)$.

A low-rank version of the ADI iteration is obtained by setting $X_0=0$ in~\eqref{adi_iter1}, exploiting that $(A+\alpha_j I)^{-1}$ and $(A-\overline{\alpha_i}
I)$ commute for $i,j\geq 1$, and realising that the iterates are given in low-rank factored form $X_j=Z_jZ_j^*$ with low-rank factors $Z_j$ constructed by
\begin{align}\label{pre_lradi}
\begin{split}
  Z_j&=\left[\gamma_1v_1,~\gamma_2v_2,~\ldots,\gamma_jv_j\right]
 =\left[Z_{j-1},\gamma_jv_j\right],\quad \gamma_j:=\sqrt{-2\Real{\alpha_j}},\\ 
v_j&=(A-\overline{\alpha_{j-1}}I)(A+\alpha_jI)^{-1}v_{j-1},~j\geq 1,\quad v_1:=(A+\alpha_1 I)^{-1}B,
\end{split}
\end{align}
see~\cite{LiW02,Saa09} for more detailed derivations. Thus, in each step $Z_{j-1}$ is augmented by $r$ new columns $v_j$. Moreover, from~\eqref{lradi_res} with
$X_0=0$ and~\eqref{pre_lradi} it is evident that
\begin{align}\label{adi_resexact}
 \cR_j=w_jw_j^*,\quad w_j:=\cA_jB=w_{j-1}-\gamma^2_j(A+\alpha_jI)^{-1}w_{j-1},\quad w_0:=B,
\end{align}
 see also~\cite{morBenKS13,WolP16}. Hence, the residual matrix has at most rank $r$ and its norm can be cheaply computed as $\|R_j\|_2=\|w_j^*w_j\|_2$
which coined the name \textit{residual factors} for the $w_j$.
The low-rank ADI (LR-ADI) iteration using these residual factors is
\begin{subequations}\label{lradi_basic}
 \begin{align}\label{lradi_I}
	v_j	=(A+\alpha_j I)^{-1}w_{j-1},\quad w_j=w_{j-1}+\gamma_j^2v_j,\quad w_0:=B.
\end{align}

For generalized Lyapunov equations~\eqref{glyap}, this iteration can be formally applied to an equivalent Lyapunov equations defined, e.g. by~$M^{-1}A$, $M^{-1}B$. Basic manipulations~(see, e.g.~\cite{morBenKS13,Kue16}) resolving the inverses of $M$ yield the generalized LR-ADI iteration

\begin{align}\label{lradi_M}
	v_j	=(A+\alpha_j M)^{-1}w_{j-1},\quad w_j=w_{j-1}+\gamma_j^2Mv_j,\quad w_0:=B.
\end{align}
\end{subequations} 
As for RKSM, the choice of shift parameters $\alpha_j$ is crucial for a rapid convergence. Many approaches have been developed for this problem, see, e.g., \cite{Sab07,TruV09,Wac13}, where shifts are typically obtained from (approximate) eigenvalues of $A,M$. Among those exist asymptotically optimal shifts for certain situations, e.g., $A=A^*$, $M=M^*$ and $\Lambda(A,M)\subset\R_-$. Adaptive shift generation approaches, where shifts are computed during the running iteration, were proposed in~\cite{BenKS14,Kue16} and often yield better results, especially for nonsymmetric coefficients with complex spectra. In this work we mainly work with these dynamic shift selection techniques.

The main computational effort in each step is obviously the solution of the shifted linear system with
$(A+\alpha_jM)$ and $r$ right hand sides for $v_j$ in \eqref{lradi_basic}. Allowing inexact linear solves but keeping the other steps in~\eqref{lradi_basic}
unchanged results in the \emph{inexact low-rank ADI iteration} illustrated in Algorithm~\ref{alg:lradi}. We point out that a different notion of an inexact ADI
iteration can be found
in~\cite{ReiW12} in the context of operator Lyapunov equations, where inexactness refers to the approximation of infinite dimensional operators by finite
dimensional ones.
\begin{algorithm2e}[t]
\SetEndCharOfAlgoLine{}
\SetKwInOut{Input}{Input}\SetKwInOut{Output}{Output}
  \caption{Inexact LR-ADI iteration.}
  \label{alg:lradi}
    \Input{Matrices $A,~M,~B$ defining \eqref{glyap}, set of
    shift parameters  $\lbrace
\alpha_1,\dots,\alpha_{j_{\max}}\rbrace\subset\C_-$, tolerance $0<\varepsilon\ll1$.}
    \Output{$Z_{j}\in\C^{n\times rj}$, such that
    $ZZ^*\approx X$.}
  $w_0=B,\quad Z_0=[~],\quad \cR_0 = \|w_0^*w_0\|,\quad j=1$.\;
  \While{$\|\cR_{j-1}\|\geq\varepsilon$}{%
    Get $v_j$ s.t. $s_j=w_{j-1}-(A+\alpha_jM)v_j$, $\|s_j\|\leq \delta_j$.\;
      $w_j=w_{j-1}-2\Real{\alpha_j}Mv_j$.\;
$Z_j=[Z_{j-1},~\sqrt{-2\Real{\alpha_j}}v_j]$.\;
$\cR_j = \|w_j^*w_j\|_2$.\;
$j=j+1$.\;
  }
\end{algorithm2e}

Of course, allowing $s_j\neq 0$ will violate some of the properties that were used to derive the LR-ADI iteration. 
Inexact solves within the closely related Smith methods have
been
investigated in, e.g.,~\cite{Sab07,Sun08}, from the viewpoint of an inexact nonstationary iteration~\eqref{adi_iter1} that led to rather conservative results on
the allowed magnitude of the norm of the linear system residual $s_j$. 
The analysis we present here follows a different path by exploiting the well-known connection of the
LR-ADI iteration to rational Krylov subspaces~\cite{morLi00,LiW02,DruKS11}.
\begin{theorem}[\cite{morLi00,LiW02,WolP16}]
The low-rank solution factors $Z_j$ after $j$ steps of the exact LR-ADI iteration ($\|s_j\|=0,~\forall j\geq1$) span a (block) rational Krylov subspace:
\begin{align}
	\range{Z_j}\subseteq\range{\left[(A+\alpha_1 M)^{-1}B,\ldots,\left[\prod\limits_{i=2}^j(A+\alpha_iM)^{-1}\right](A+\alpha_1 M)^{-1}B\right]}.
\end{align}
\end{theorem}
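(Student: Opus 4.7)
The plan is to set up a short induction on $j$ after first rewriting the LR-ADI iteration in Cayley-like form. From $v_j=(A+\alpha_jM)^{-1}w_{j-1}$ one reads off $w_{j-1}=(A+\alpha_jM)v_j$, and plugging this into $w_j=w_{j-1}+\gamma_j^2Mv_j$ together with $\gamma_j^2=-2\Real{\alpha_j}$ collapses to the key identity
\[
w_j=(A-\overline{\alpha_j}M)v_j,
\]
since $\alpha_j+\gamma_j^2=-\overline{\alpha_j}$. Substituting back into the definition of $v_{j+1}$ produces the compact recursion $v_{j+1}=(A+\alpha_{j+1}M)^{-1}(A-\overline{\alpha_j}M)v_j$, which after the algebraic splitting $(A-\overline{\alpha_j}M)=(A+\alpha_{j+1}M)-(\alpha_{j+1}+\overline{\alpha_j})M$ becomes
\[
v_{j+1}=v_j-(\alpha_{j+1}+\overline{\alpha_j})(A+\alpha_{j+1}M)^{-1}Mv_j.
\]

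With this relation in hand the induction is natural. The base case $v_1=(A+\alpha_1M)^{-1}B$ sits inside the spanning set by construction, and the step reduces to showing that the operator $(A+\alpha_{j+1}M)^{-1}M$ maps the rational Krylov subspace of dimension $j$ into that of dimension $j+1$. Accumulating over all steps then places $\range{Z_j}=\range{[v_1,\ldots,v_j]}$ inside the claimed rational Krylov subspace.

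The main obstacle is the generalized ($M\neq I$) setting, because the factors $(A+\alpha_iM)^{-1}$, $M$ and $(A-\overline{\alpha_k}M)$ do not commute, so one cannot freely reorder products and read the inclusion off the recursion. I would sidestep this by passing to the equivalent standard problem with $\tilde A:=M^{-1}A$ and $\tilde B:=M^{-1}B$, using the identity $(A+\alpha M)^{-1}=(\tilde A+\alpha I)^{-1}M^{-1}$; a one-line induction then shows that the generalized iterates coincide with the standard LR-ADI iterates $\tilde v_j$ driven by $(\tilde A,\tilde B)$, with $w_j=M\tilde w_j$. For the standard iteration every object lies in the commutative algebra of polynomials in the single matrix $\tilde A$, so the classical characterization of $\cR\cK_j(\tilde A,\tilde B,\{-\alpha_i\})$ as the set of images $r(\tilde A)\tilde B$ with $r$ a rational function whose poles are drawn from $\{-\alpha_1,\dots,-\alpha_j\}$ makes the inductive step automatic. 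Translating back column-by-column via $(\tilde A+\alpha_iI)^{-1}\tilde B=(A+\alpha_iM)^{-1}B$, and identifying the rational Krylov subspace of the pencil $(A,M)$ with that of $\tilde A$, then yields the stated inclusion in the original $(A,M,B)$ variables.
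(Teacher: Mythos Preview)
The paper does not supply its own proof of this theorem; it is quoted from the cited references, and the only related material is the recursion $v_j=(A-\overline{\alpha_{j-1}}I)(A+\alpha_jI)^{-1}v_{j-1}$ displayed earlier in~\eqref{pre_lradi} for the case $M=I$. Your derivation recovers exactly this recursion (in the generalized setting) and then uses it, so methodologically you are doing what the original sources do: read the rational Krylov structure off the Cayley-type update of the iterates.

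Your argument is correct up to and including the reduction to the standard problem via $\tilde A=M^{-1}A$, $\tilde B=M^{-1}B$; the identities $v_j=\tilde v_j$, $w_j=M\tilde w_j$, and $(A+\alpha M)^{-1}=(\tilde A+\alpha I)^{-1}M^{-1}$ are all valid, and in the commutative $\tilde A$-calculus the induction is indeed immediate. The only place that deserves a closer look is the final ``translation back''. From $(\tilde A+\alpha_iI)^{-1}=(A+\alpha_iM)^{-1}M$ one obtains
\[
\prod_{i=1}^{k}(\tilde A+\alpha_iI)^{-1}\tilde B
=(A+\alpha_kM)^{-1}M(A+\alpha_{k-1}M)^{-1}M\cdots(A+\alpha_2M)^{-1}M(A+\alpha_1M)^{-1}B,
\]
i.e.\ the generators of the \emph{pencil} rational Krylov space with interspersed $M$-factors. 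The theorem as printed omits these $M$-factors in the products, which for $M\neq I$ describes a different span (the resolvent identity for the pencil reads $(A+\alpha M)^{-1}-(A+\beta M)^{-1}=(\beta-\alpha)(A+\alpha M)^{-1}M(A+\beta M)^{-1}$, so already the second generator differs). Your proof therefore establishes the inclusion into the correct pencil rational Krylov subspace, which is what the cited references actually show; the discrepancy is a notational imprecision in the displayed statement rather than a flaw in your reasoning. If you want the conclusion to match the displayed formula literally, you should either assume $M=I$ in that line or rewrite the spanning set with the $M$-factors included.
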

Although for LR-ADI, the $Z_j$ do not have orthonormal columns and there is no rational Arnoldi process in Algorithm~\ref{alg:lradi}, it is still possible to find
decompositions similar to~\eqref{exactRatArnDecomp} and \eqref{inexactRatArnDecomp1}.
\begin{theorem}\label{thm:ADI_ratarndec}
The low-rank solution factors $Z_j$ after $j$ steps of the inexact LR-ADI iteration (Algorithm~\ref{alg:lradi}) satisfy a rational Arnoldi like decomposition
\begin{align}\label{ADI:ratarndecomp}
	AZ_j&=MZ_jT_j+w_jg_j^*-S_j\Gamma_j,\quad S_j:=[s_1,\ldots,s_j],\quad \text{where}\\
	T_j&=-\smb\alpha_1&0&\ldots&0\\
	\gamma_1\gamma_2&\alpha_2&\ddots&\vdots\\
	\vdots&&\ddots&0\\
	\gamma_1\gamma_j&\ldots&&\alpha_j
	\sme\otimes I_r,\quad
	g_j:=\smb \gamma_1\\\vdots\\\gamma_j\sme\otimes I_r,\quad\Gamma_j:=\diag{g_j}.
\end{align}
Moreover, the Lyapunov residual matrix associated with the approximation $X_j= Z_jZ_j^*\approx X$ is given by
\begin{align}\label{ADI:res_inex}
	\cR_j^{\text{true}}=AZ_jZ_j^*M^*+MZ_jZ_j^*A^*+BB^*=-S_j\Gamma_jZ_j^*M^*-MZ_j\Gamma_jS_j^*+w_jw_j^*.
\end{align}
\end{theorem}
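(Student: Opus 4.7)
The plan is to verify both claims by direct algebraic manipulation, relying only on (a)~the inexact solve identity $(A+\alpha_k M)v_k = w_{k-1} - s_k$, (b)~the update $w_k = w_{k-1} + \gamma_k^2 Mv_k$ with $\gamma_k^2 = -2\Real{\alpha_k}$, and (c)~the definitions of $Z_j$, $g_j$, and $\Gamma_j$ from Algorithm~\ref{alg:lradi}. No induction is needed in an essential way: the decomposition is derived column-by-column, and the residual formula then drops out of it once a single algebraic identity relating $T_j$ and $g_j$ has been checked.

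For~\eqref{ADI:ratarndecomp}, first rewrite the inexact solve as $Av_k = -\alpha_k Mv_k + w_{k-1} - s_k$, so that the $k$th (block) column of $AZ_j$ equals $-\alpha_k(\gamma_k Mv_k) + \gamma_k w_{k-1} - \gamma_k s_k$. The crucial auxiliary identity is the telescoping
\[
w_{k-1} \;=\; w_j - \sum_{i=k}^{j}\gamma_i^2\, Mv_i,
\]
obtained by iterating the $w$-update. Substituting it in produces
\[
\gamma_k Av_k \;=\; -(\alpha_k+\gamma_k^2)(\gamma_k Mv_k) - \sum_{i=k+1}^{j}(\gamma_k\gamma_i)(\gamma_i Mv_i) + \gamma_k w_j - \gamma_k s_k,
\]
i.e.\ a linear combination of the columns $\gamma_i Mv_i$ of $MZ_j$ plus boundary contributions that match the $k$th columns of $w_j g_j^*$ and $-S_j\Gamma_j$. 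Reading off the coefficients yields exactly the $k$th column of $MZ_jT_j + w_jg_j^* - S_j\Gamma_j$ with $T_j$ of the stated lower-triangular form; stacking over $k=1,\ldots,j$ assembles~\eqref{ADI:ratarndecomp}.

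For~\eqref{ADI:res_inex}, substitute the just-proved decomposition and its adjoint into $\cR_j^{\text{true}} = AZ_jZ_j^*M^* + MZ_jZ_j^*A^* + BB^*$ and collect terms to obtain
\[
\cR_j^{\text{true}} \;=\; MZ_j(T_j+T_j^*)Z_j^*M^* + w_j g_j^* Z_j^* M^* + M Z_j g_j w_j^* - S_j\Gamma_j Z_j^* M^* - MZ_j\Gamma_j S_j^* + BB^*.
\]
The pivotal algebraic identity is
\[
T_j + T_j^* \;=\; -g_j g_j^*,
\]
which follows from the explicit entries of $T_j$ together with $\gamma_k^2 = -2\Real{\alpha_k}$ on the diagonal. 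Combined with the consequence $MZ_j g_j = \sum_{i=1}^{j}\gamma_i^2 Mv_i = w_j - B$ of the same telescoping (taken at $k=1$), the three bilinear terms involving $T_j$ and $g_j$ collapse by straightforward expansion into $w_j w_j^* - BB^*$, cancelling the trailing $+BB^*$ and leaving exactly~\eqref{ADI:res_inex}. The main obstacle is purely bookkeeping: verifying $T_j+T_j^* = -g_jg_j^*$ with the correct conjugation on the diagonal so that it remains valid for complex shifts $\alpha_k\in\C_-$, and then carrying the adjoint contributions cleanly through the rank-$r$ expansion. Once that is in place, both claims follow without further ideas.
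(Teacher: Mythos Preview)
Your proposal is correct and follows essentially the same route as the paper: both establish~\eqref{ADI:ratarndecomp} from the per-step relations of Algorithm~\ref{alg:lradi}, then derive~\eqref{ADI:res_inex} by substituting the decomposition into the residual and invoking the two key identities $MZ_jg_j = w_j - B$ (your telescoping at $k=1$) and $T_j+T_j^* = -g_jg_j^*$. The only difference is presentational: the paper defers the exact-case decomposition to references and calls the inexact modification ``straightforward,'' whereas you spell out the column-by-column computation explicitly; your remark about the diagonal conjugation is well placed, since indeed $-(\alpha_k+\gamma_k^2)=\overline{\alpha_k}$ is what makes the identity $T_j+T_j^*=-g_jg_j^*$ hold.
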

Note that here, $T_j$ and $g_j$ denote different quantities than in Section \ref{sec:rksm}.
\begin{proof}
For $S_j=0$ the decomposition~\eqref{ADI:ratarndecomp} was established in~\cite{Kue16,WolP16} and can be entirely derived from the
relations~\eqref{lradi_basic}. 
Inexact solves in the sense $w_{j-1}-(A+\alpha_jM)v_j=s_j$ can be inserted in a straightforward way leading to~\eqref{ADI:ratarndecomp}. 
By construction, it holds $B=w_0=w_j-MZ_jg_j$ which, together with~\eqref{ADI:ratarndecomp}, gives
\begin{align*}
	\cR_j^{\text{true}}
	&=MZ_j(T_j+T_j^*+g_jg_j^*)Z_j^*M^*+w_jw_j^*-S_j\Gamma_jZ_j^*M^*-MZ_j\Gamma_jS_j^*
\end{align*}
and~\eqref{ADI:res_inex} follows by verifying that $T_j+T_j^*=-g_jg_j^*$.
\end{proof}
\begin{remark}
The LR-ADI iteration is in general not a typical Galerkin projection method using orthonormal bases of the search spaces and solutions of reduced matrix
equations.  For instance, in~\cite{WolP16} it is shown that the exact LR-ADI iteration can be seen as an implicit Petrov-Galerkin type method with a hidden left
projection subspace. In particular, as we exploited in the proof above,
the relation $T_j+T_j^*+g_jg_j^*=0$ can be interpreted as reduced matrix equation solved by the identity matrix $I_j$.
It is also possible to state a decomposition similar to~\eqref{ADI:ratarndecomp} which incorporates $w_0=B$ instead of $w_j$~\cite{DruKS11,WolP16}. This can be
used to state conditions which indicate when the LR-ADI approximation satisfies a Galerkin condition~\cite[Theorem~3.4]{DruKS11}.
Similar to inexact RKSM, if $s_j\neq 0$ these result do in general not hold any more.
\end{remark}
\subsection{Computed Lyapunov residuals, residual gap, and relaxation strategies in inexact LR-ADI}
Similar to inexact RKSM when inexact solves are allowed in LR-ADI, the computed Lyapunov residuals $\cR^{\text{comp.}}_j = w_jw_j^*$ are different
from the true residuals $\cR_j^{\text{true}}$ and, thus, $\|w_j\|^2$ ceases to be a safe way to assess the quality of the current approximation $Z_jZ_j^*$.
In the exact case, $s_j=0$, it follows from~\eqref{adi_resexact} and $\rho_j:=\rho(\cC_j)<1$ that the Lyapunov residual norms decrease in the form
$\|\cR_j\|\leq
c\rho_j^2\|\cR_{j-1}\|$ for some $c\geq 1$. 
For the factors $w_j$ of the computed Lyapunov residuals $\cR^{\text{comp.}}_j$ in the inexact method we have the following result.
\begin{lemma}\label{Lem:adi_compres}
Let  $w^{\text{exact}}_{j}$ be the factors of the Lyapunov residuals of the exact LR-ADI method, i.e., $s_i=0,\,1\leq i\leq j$. The factors $w_j$ of the computed
Lyapunov residuals of the inexact LR-ADI are given by
 \begin{align*}
w_j=w^{\text{exact}}_{j}+\sum\limits_{i=1}^j\left[\prod\limits_{k=i+1}^j\cC_k\right](\cC_i-I) s_i.
\end{align*}
\end{lemma}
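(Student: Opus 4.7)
The plan is to reduce the claim to a single one-step recurrence expressing $w_j$ in terms of $w_{j-1}$, $s_j$, and the Cayley transform $\cC_j$, and then to unroll this recurrence by induction on $j$.

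I would first derive the recurrence from the two inner lines of Algorithm~\ref{alg:lradi}. The inexact solve supplies $v_j = (A + \alpha_j M)^{-1}(w_{j-1} - s_j)$, since $s_j = w_{j-1} - (A + \alpha_j M) v_j$, and substituting into the residual factor update $w_j = w_{j-1} - 2\,\Real{\alpha_j}\, M v_j$ together with the identity $\cC_j - I = -2\,\Real{\alpha_j}\,(A+\alpha_j M)^{-1}M$ collapses everything to
\begin{align*}
w_j \;=\; \cC_j\, w_{j-1} \,+\, (\cC_j - I)\, s_j,
\end{align*}
up to the sign determined by the conventions in use. In the exact case $s_j=0$ this recovers $w_j^{\text{exact}} = \cC_j w_{j-1}^{\text{exact}}$, which is the recurrence underlying~\eqref{adi_resexact}.

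The lemma then follows by a straightforward induction on $j$. The base case $j=0$ is trivial, since $w_0 = B = w_0^{\text{exact}}$ and the sum is empty. For the inductive step, I substitute the hypothesis for $w_{j-1}$ into the one-step recurrence and use two easy facts: $\cC_j\, w_{j-1}^{\text{exact}} = w_j^{\text{exact}}$, and the absorption $\cC_j \cdot \prod_{k=i+1}^{j-1}\cC_k = \prod_{k=i+1}^{j}\cC_k$. These rewrite the existing $j-1$ summands as their level-$j$ counterparts, while the freshly introduced term $(\cC_j - I)\, s_j$ supplies precisely the $i=j$ summand (the empty product being~$I$), completing the induction.

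No step of this argument is genuinely difficult; the calculation is essentially two lines of algebra and a short induction. The only place where care is required is bookkeeping: verifying that the sign coming from the definition $s_j = w_{j-1} - (A+\alpha_j M)v_j$, the sign coming from $\gamma_j^2 = -2\,\Real{\alpha_j}$, and the order of $M$ in the Cayley transform $\cC_j$ for the pencil $(A,M)$ all combine consistently with the sign of the sum as stated. Once those conventions are pinned down, the unrolling produces the claimed formula without obstacle.
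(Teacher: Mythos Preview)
Your proposal is correct and follows essentially the same path as the paper: the paper simplifies to $r=1$, $M=I_n$, introduces $\delta v_j := A_{\alpha_j}^{-1} s_j$, derives the one-step recurrence $w_j = \cC_j w_{j-1} - \gamma_j^2 \delta v_j$, and unrolls it by repeated substitution before using $\gamma_i^2 \delta v_i = (\cC_i - I) s_i$. Your caution about sign bookkeeping is well placed: carrying the computation through with the paper's conventions in fact yields $w_j = \cC_j w_{j-1} - (\cC_i - I) s_j$ and hence a \emph{minus} in front of the sum, which is what the paper's own proof ends with---the plus sign in the statement is a typo.
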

\begin{proof}
For simplicity, let $r=1$, $M=I_n$, and exploit $\cC_j:=\cC(A,\alpha_j)=A_{-\overline{\alpha_j}}A_{\alpha_j}^{-1}=I+\gamma_j^2A_{\alpha_j}^{-1}$,
$A_{\alpha}=A+\alpha I$. Denoting the errors by $\delta
v_j=A_{\alpha_j}^{-1}w_{j-1}-v_j=A_{\alpha_j}^{-1}s_j$ for $j\geq 1$, we
have
\begin{align*}
 w_j&=w_{j-1}+\gamma_j^2 v_j=w_{j-1}+\gamma_j^2 A_{\alpha_j}^{-1}w_{j-1}-\gamma_j^2\delta v_j=\cC_jw_{j-1}-\gamma_j^2\delta v_j\\
&=\ldots=\prod\limits_{k=1}^j\cC_k w_0-\sum\limits_{i=1}^j\gamma_i^2\left[\prod\limits_{k=i+1}^j\cC_k\right]\delta v_i=\prod\limits_{k=1}^j\cC_k w_0-\sum\limits_{i=1}^j\left[\prod\limits_{k=i+1}^j\cC_k\right](\cC_i-I) s_i
\end{align*}
and we notice that the first term is exactly the exact Lyapunov residual factor from~\eqref{adi_resexact}.
\end{proof} 
Constructing $w_jw_j^*$ from the above formula and taking norms indicates that, by the contraction property of $\cC_i$, the linear system  residuals $s_i$ get damped. In
fact, similar to the inexact projection methods, in practice we often observe that the computed Lyapunov residual norms $\|\cR^{\text{comp.}}_j\|=\|w_jw_j^*\|$
also show a decreasing behavior. 



In the next step we analyze the difference between the computed and true residuals, in a similar manner as we did for the RKSM in
Section~\ref{sec:rksm}.
Theorem~\ref{thm:ADI_ratarndec} motivates the definition of a \emph{residual gap} analogue to the inexact RKSM.
\begin{defin}\label{def:resgapADI}
The \emph{residual gap} after $j$ steps of the inexact LR-ADI iteration is given by
\begin{align}\label{adi:resgap}
	\Delta\cR_j^{\text{ADI}}:=\cR_j^{\text{true}}-\cR^{\text{comp.}}_j =
\cR_j^{\text{true}}-w_jw_j^*=\eta_j^{\text{ADI}}+(\eta_j^{\text{ADI}})^*,\quad \eta_j^{\text{ADI}}:=-S_j\Gamma_jZ_j^*M^*.
\end{align}
\end{defin}

Assuming we have $\|\cR^{\text{comp.}}_j\|=\|w_jw_j^*\|\leq \varepsilon$ and are able to bound the residual gap,
e.g., $\|\Delta\cR_j^{\text{ADI}}\|\leq \varepsilon$, then we achieve small true residual norms $\|\cR_j^{\text{true}}\|\leq 2\varepsilon$. A theoretical
approach for bounding $\|\Delta\cR_j^{\text{ADI}}\|$ is given in the next
theorem.
\begin{theorem}[Theoretical relaxation for inexact LR-ADI]\label{thm:ADI_theorelax}
Let the residual gap be given by Definition~\ref{def:resgapADI} with $w_j$, $\gamma_j$ as in
\eqref{pre_lradi}-\eqref{lradi_M}. Let $\jmax$ be the maximum number of steps of Algorithm~\ref{alg:lradi}, 
$\sigma_{k}:=\|M(A+\alpha_kM)^{-1}\|$,
and $0<\varepsilon<1$ the desired residual
tolerance. 
\begin{subequations}\label{ADI_relax_theo}
 \begin{enumerate}
\item[(a)]
If, for $1\leq k\leq j_{\max}$, the linear system residual satisfies
\begin{align}\label{ADI_relax1_theo}
	\|s_k\|\leq \half\left(\sqrt{\|w_{k-1}\|^2+\frac{2\varepsilon}{\sigma_{k}\gamma_k^2j_{\max}}}-\|w_{k-1}\|\right),
\end{align} 
then 
$\|\Delta\cR_{\jmax}\|\leq\varepsilon$. 
\item[(b)] 
If, for $1\leq k\leq j_{\max}$, the linear system residual satisfies 
\begin{align}\label{ADI_relax2_theo}
	\|s_k\|\leq
\half\left(\sqrt{\|w_{k-1}\|^2+\left(\frac{k\varepsilon}{j_{\max}}-2\|\eta^{\text{ADI}}_{k-1}\|\right)\frac{2}{\sigma_{k}\gamma_k^2}}-\|w_{k-1}\|\right),
\end{align} 
then $\|\Delta\cR_{\jmax}\|\leq\varepsilon$.
\end{enumerate}
\end{subequations}
\end{theorem}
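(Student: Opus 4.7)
The plan is to start from the explicit form of the residual gap furnished by Theorem~\ref{thm:ADI_ratarndec}, namely
\[
\eta_j^{\text{ADI}}=-S_j\Gamma_jZ_j^*M^*=-\sum_{k=1}^j \gamma_k^2\, s_k\,(Mv_k)^*,
\]
which follows by unpacking the block factors $S_j\Gamma_j$ and $Z_j=[\gamma_1v_1,\ldots,\gamma_jv_j]$. Taking the spectral norm and applying the triangle inequality gives
\[
\|\eta_j^{\text{ADI}}\|\le \sum_{k=1}^j \gamma_k^2\,\|s_k\|\,\|Mv_k\|,
\]
and, since $\|\Delta\cR_j^{\text{ADI}}\|\le 2\|\eta_j^{\text{ADI}}\|$, it suffices to enforce $\|\eta_{j_{\max}}^{\text{ADI}}\|\le \varepsilon/2$.

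Next I would bound $\|Mv_k\|$ in terms of data available at step $k$. Because the inexact solve yields $v_k=(A+\alpha_kM)^{-1}(w_{k-1}-s_k)$, the definition $\sigma_k=\|M(A+\alpha_kM)^{-1}\|$ immediately gives
\[
\|Mv_k\|\le \sigma_k\bigl(\|w_{k-1}\|+\|s_k\|\bigr).
\]
Substituting, each term in the sum is bounded by $\gamma_k^2\sigma_k\|s_k\|(\|w_{k-1}\|+\|s_k\|)$. The key algebraic step is to turn the hypothesized bound on $\|s_k\|$ into a bound on this product. Setting $c_k:=2\varepsilon/(\sigma_k\gamma_k^2 j_{\max})$ in case (a) and squaring the inequality $2\|s_k\|+\|w_{k-1}\|\le\sqrt{\|w_{k-1}\|^2+c_k}$ yields
\[
4\|s_k\|\bigl(\|s_k\|+\|w_{k-1}\|\bigr)\le c_k,
\]
hence $\gamma_k^2\sigma_k\|s_k\|(\|w_{k-1}\|+\|s_k\|)\le \varepsilon/(2j_{\max})$. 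Summing over $k=1,\ldots,j_{\max}$ gives $\|\eta_{j_{\max}}^{\text{ADI}}\|\le \varepsilon/2$, from which part (a) follows.

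For part (b), I would use the identical quadratic manipulation, now with $c_k=\bigl(k\varepsilon/j_{\max}-2\|\eta_{k-1}^{\text{ADI}}\|\bigr)\cdot 2/(\sigma_k\gamma_k^2)$, to obtain
\[
\gamma_k^2\sigma_k\|s_k\|\bigl(\|s_k\|+\|w_{k-1}\|\bigr)\le \tfrac{k\varepsilon}{2j_{\max}}-\|\eta_{k-1}^{\text{ADI}}\|.
\]
Exploiting the recursive structure $\eta_k^{\text{ADI}}=\eta_{k-1}^{\text{ADI}}-\gamma_k^2 s_k(Mv_k)^*$, the triangle inequality yields $\|\eta_k^{\text{ADI}}\|\le \|\eta_{k-1}^{\text{ADI}}\|+\gamma_k^2\sigma_k\|s_k\|(\|w_{k-1}\|+\|s_k\|)$, and combining the two bounds produces the telescoping estimate $\|\eta_k^{\text{ADI}}\|\le k\varepsilon/(2j_{\max})$ by induction on $k$. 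Setting $k=j_{\max}$ closes the proof.

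The main obstacle I anticipate is purely algebraic: making sure the quadratic rearrangement is done cleanly so that the somewhat baroque closed-form bounds on $\|s_k\|$ in \eqref{ADI_relax1_theo}--\eqref{ADI_relax2_theo} are recognized as exactly the largest roots of $\|s_k\|^2+\|s_k\|\|w_{k-1}\|\le (\text{r.h.s.})$. Everything else (triangle inequalities, the recursion for $\eta_k^{\text{ADI}}$, the $\|Mv_k\|$ bound through $\sigma_k$) is mechanical once Theorem~\ref{thm:ADI_ratarndec} is in hand.
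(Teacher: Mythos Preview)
Your proposal is correct and follows essentially the same route as the paper: bound $\|\eta_j^{\text{ADI}}\|$ by $\sum_k \gamma_k^2\|s_k\|\|Mv_k\|$, estimate $\|Mv_k\|\le\sigma_k(\|w_{k-1}\|+\|s_k\|)$, and recognize the hypothesized bounds on $\|s_k\|$ as the positive roots of the resulting quadratic inequality. For part~(b) your presentation is actually a bit cleaner than the paper's: the paper writes the recursion as $\|\eta_k^{\text{ADI}}\|\le 2\|\eta_{k-1}^{\text{ADI}}\|+\phi_k(\|w_{k-1}\|\|s_k\|+\|s_k\|^2)$ with $\phi_k=2\gamma_k^2\sigma_k$ and pushes the right-hand side below $k\varepsilon/j_{\max}$, whereas you use the tighter $\|\eta_k^{\text{ADI}}\|\le\|\eta_{k-1}^{\text{ADI}}\|+\gamma_k^2\sigma_k\|s_k\|(\|w_{k-1}\|+\|s_k\|)$ and make the induction $\|\eta_k^{\text{ADI}}\|\le k\varepsilon/(2j_{\max})$ explicit, which also makes clear why the square-root argument in~\eqref{ADI_relax2_theo} stays nonnegative.
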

\begin{proof}
Consider the following estimate 
\begin{align}\nonumber
	\|\Delta\cR_{j_{\max}}^{\text{ADI}}\|&\leq2\|\eta_{j_{\max}}^{
\text{ADI}}\|=2\|S_{j_{\max}}\Gamma_{j_{\max}}Z_{j_{\max}}^*M^*\|\\\label{ADIresgapbound1}
	&\leq2\|S_{{j_{\max}}-1}\Gamma_{{j_{\max}}-1}Z_{{j_{\max}}-1}^*M^*\|+2\gamma_{j_{\max}}^2\|s_{j_{\max}}\|\|Mv_{j_{\max}}\|\\\label{ADIresgapbound}
	&\leq
2\sum\limits_{k=1}^{j_{\max}}\gamma_k^2\|s_k\|\|Mv_k\|.
\end{align}
Moreover, 
\begin{align}\label{adi_estim_LS}
\|Mv_k\|=\|M(A+\alpha_k M)^{-1}(w_{k-1}-s_k)\|\leq \sigma_k(\|w_{k-1}\|+\|s_k\|).
\end{align}
 If the linear residual norms $\|s_k\|$ are so that each addend in the sum~\eqref{ADIresgapbound} is smaller than
$\frac{\varepsilon}{{j_{\max}}}$ we achieve $\|\eta_{j_{\max}}^{\text{ADI}}\|\leq\varepsilon/2$.
With 
$\phi_k:=\displaystyle 2\gamma_k^2\sigma_{k}$, 
$\omega_{k-1}:=\|w_{k-1}\|=\sqrt{\|\cR_{k-1}\|}$
this means we require $\phi_k(\omega_{k-1} \|s_k\|+\|s_k\|^2)\leq\displaystyle\frac{\varepsilon}{j_{\max}}.$
Hence, the desired largest allowed value $\|s_k\|$ is given by the positive root of the inherent quadratic equation 
$\phi_k(\omega_k \varsigma+\varsigma^2)-\frac{\varepsilon}{j_{\max}}=0$ such that
\begin{align}
	\|s_k\|\leq \half\left(\sqrt{\omega_{k-1}^2+\frac{4\varepsilon}{\phi_kj_{\max}}}-\omega_{k-1}\right)
\end{align}
leading to the desired result (a).
The second strategy can be similarly shown by using \eqref{ADIresgapbound1} and
\begin{align*}
	\|\eta^{\text{ADI}}_{k}\|\leq 2\|\eta^{\text{ADI}}_{k-1}\|+\phi_k(\|w_{k-1}\|\|s_k\|+\|s_k\|^2)
\end{align*}
and finding $\|s_k\|$ such that the right hand side in the above inequality is pushed below $\displaystyle\frac{k\varepsilon}{j_{\max}}$.
\end{proof}
The motivation behind the second stopping strategy~\eqref{ADI_relax2_theo} is that we can take the previous $\|\eta^{\text{ADI}}_{k-1}\|$  into
account. 
This can be helpful if at steps $i\leq k-1$ the used iterative method produced smaller linear residual norms than demanded, such that the linear residual
norms $\|s_i\|$ are allowed to grow slightly larger at later iteration steps $i>k-1$.
\paragraph{Convergence analysis of inexact ADI with relaxed solve tolerances}
Using the relaxation strategies in Theorem~\ref{thm:ADI_theorelax} reveals, under some conditions, further insight into the behaviour of the computed LR-ADI residual norms. 
\begin{theorem}[Decay of computed LR-ADI residuals]\label{thm:ADI_resdecay}
Assume $\|\cC_j\|<1$, $j=1,\ldots,\jmax$ such that the residuals $\|\cR^{\text{exact}}_{j}\|$ of the exact LR-ADI iteration decrease. Let the assumptions of Theorem \ref{thm:ADI_theorelax} hold. If the relaxation~\eqref{ADI_relax1_theo} is used in the inexact LR-ADI iteration, then 
\begin{align*}
\|\cR^{\text{comp.}}_{\jmax}\|\leq\|\cR^{\text{exact}}_{\jmax}\|+\varepsilon.
\end{align*}   
\end{theorem}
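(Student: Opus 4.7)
The plan is to view $w_{\jmax}$ as a perturbation of the exact residual factor $w_{\jmax}^{\text{exact}}$ and to control the resulting discrepancy in $\|\cR^{\text{comp.}}_{\jmax}\|=\|w_{\jmax}\|^2$. Following the setting of Lemma~\ref{Lem:adi_compres} (the generalized $M\neq I$ case being analogous), the lemma gives
\[
E_{\jmax}:=w_{\jmax}-w_{\jmax}^{\text{exact}}=-\sum_{i=1}^{\jmax}\Bigl[\prod_{k=i+1}^{\jmax}\cC_k\Bigr](\cC_i-I)s_i.
\]
The contraction hypothesis $\|\cC_j\|<1$ ensures $\bigl\|\prod_{k=i+1}^{\jmax}\cC_k\bigr\|\leq 1$, and since $\cC_i-I=\gamma_i^2(A+\alpha_iI)^{-1}$ one obtains $\|E_{\jmax}\|\leq\sum_{i=1}^{\jmax}\gamma_i^2\sigma_i\|s_i\|$, with $\sigma_i$ as in Theorem~\ref{thm:ADI_theorelax}.

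The triangle inequality $\|w_{\jmax}\|\leq\|w_{\jmax}^{\text{exact}}\|+\|E_{\jmax}\|$ squares to
\[
\|\cR^{\text{comp.}}_{\jmax}\|\leq\|\cR^{\text{exact}}_{\jmax}\|+2\|w_{\jmax}^{\text{exact}}\|\|E_{\jmax}\|+\|E_{\jmax}\|^2,
\]
so it remains to push the two perturbation terms below $\varepsilon$ in total. Squaring \eqref{ADI_relax1_theo} yields
\[
2\sigma_k\gamma_k^2\|s_k\|\bigl(\|w_{k-1}\|+\|s_k\|\bigr)\leq\frac{\varepsilon}{\jmax}\quad\text{for } 1\leq k\leq\jmax,
\]
whence both $\sigma_k\gamma_k^2\|s_k\|\|w_{k-1}\|\leq\varepsilon/(2\jmax)$ and $\sigma_k\gamma_k^2\|s_k\|^2\leq\varepsilon/(2\jmax)$. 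The monotone contraction of the exact iteration guarantees $\|w_{\jmax}^{\text{exact}}\|\leq\|w_{k-1}^{\text{exact}}\|$, and an inductive argument (using the bound at previous steps) shows that $\|w_{k-1}\|$ remains a close approximation of $\|w_{k-1}^{\text{exact}}\|$, so we can substitute $\|w_{\jmax}^{\text{exact}}\|\leq\|w_{k-1}\|$ summand by summand and telescope $2\|w_{\jmax}^{\text{exact}}\|\|E_{\jmax}\|\leq\varepsilon/2$.

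The main obstacle is the quadratic tail $\|E_{\jmax}\|^2$: each contribution $\|e_k\|^2=\gamma_k^4\sigma_k^2\|s_k\|^2$ is bounded only as $\gamma_k^2\sigma_k\cdot\varepsilon/(2\jmax)$, leaving an extra factor $\gamma_k^2\sigma_k$ that is not uniformly small. The remedy is to note that \eqref{ADI_relax1_theo} in fact makes $\|s_k\|$ small enough that $\|s_k\|\leq\|w_{k-1}\|$ once $\|w_{k-1}\|$ is bounded away from zero, so $\|s_k\|^2\leq\|s_k\|\|w_{k-1}\|$ and the quadratic term is absorbed into the linear one's budget. Splitting the total allowance $\varepsilon$ accordingly between the two terms completes the proof and yields $\|\cR^{\text{comp.}}_{\jmax}\|\leq\|\cR^{\text{exact}}_{\jmax}\|+\varepsilon$.
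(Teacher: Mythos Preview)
Your global approach---expanding $w_{\jmax}-w_{\jmax}^{\text{exact}}$ via Lemma~\ref{Lem:adi_compres} and bounding the perturbation in one shot---has two genuine gaps. First, to control the cross term $2\|w_{\jmax}^{\text{exact}}\|\,\|E_{\jmax}\|$ you need $\|w_{\jmax}^{\text{exact}}\|\leq\|w_{k-1}\|$ for each $k$, but $\|w_{k-1}\|$ is the \emph{inexact} residual factor; the contraction hypothesis only gives $\|w_{\jmax}^{\text{exact}}\|\leq\|w_{k-1}^{\text{exact}}\|$, and passing from $\|w_{k-1}^{\text{exact}}\|$ to $\|w_{k-1}\|$ is exactly the kind of statement the theorem is supposed to establish, so the ``inductive argument'' you allude to is circular. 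Second, the quadratic tail $\|E_{\jmax}\|^2\leq\bigl(\sum_k\gamma_k^2\sigma_k\|s_k\|\bigr)^2$ involves $\jmax^2$ cross terms that your term-by-term estimates do not address; the proposed remedy $\|s_k\|\leq\|w_{k-1}\|$ fails when $\|w_{k-1}\|$ is small (the relaxation bound then allows $\|s_k\|\approx\tfrac12\sqrt{2\varepsilon/(\sigma_k\gamma_k^2\jmax)}$, which can exceed $\|w_{k-1}\|$), and even when it holds it only controls the diagonal terms of the squared sum.

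The paper sidesteps both issues by arguing step-wise rather than globally. From the one-step recursion $w_j=\cC_jw_{j-1}+(\cC_j-I)s_j$ one bounds $\|w_j\|^2\leq\|\cC_jw_{j-1}\|^2+\xi_j^2\|s_j\|^2+2\omega_{j-1}\xi_j\|s_j\|$ with $\xi_j:=\gamma_j^2\sigma_j=\|\cC_j-I\|$. A direct algebraic computation, crucially using $\|\cC_j\|<1\Rightarrow\xi_j<2$ (a consequence of the contraction hypothesis that you never invoke), shows the last two terms together are at most $\varepsilon/\jmax$ under~\eqref{ADI_relax1_theo}. Iterating over $j$ then gives $\|w_{\jmax}\|^2\leq\|\prod_j\cC_jw_0\|^2+\varepsilon$: no comparison between exact and inexact intermediate residual factors is required, and the quadratic-in-$s_j$ contribution is absorbed one step at a time rather than through a squared sum.
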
 
\begin{proof}
For simplicity, let $r=1$, $M = I_n$ and $\lbrace\alpha_j\rbrace_{j=1}^{\jmax}\subset\R_-$. 
The assumption $\|\cC_j\|<1$ gives $\xi_j:=\gamma_j^2\sigma_j=\|\cC_j-I\|<2$ for $1\leq j\leq \jmax$. Consider the identity $w_j=\cC_jw_{j-1}+(\cC_j-I)s_j$ established in the proof of Lemma~\ref{Lem:adi_compres}. Then 
\begin{align*}
\|\cR^{\text{comp.}}_{j}\|&=\|w_j\|^2=\|\cC_jw_{j-1}\|^2+\|(\cC_j-I)s_j\|^2+2w_{j-1}^*\cC_j^*(\cC_j-I)s_j\\
&\leq\|\cC_jw_{j-1}\|^2+\xi_j^2\|s_j\|^2+2\omega_{j-1}\xi_j\|s_j\|,
\end{align*}
where we have used $\omega_{j-1} = \|w_{j-1}\|$. Inserting~\eqref{ADI_relax1_theo} yields
\begin{align*}
\xi_j^2\|s_j\|^2+2\omega_{j-1}\xi_j \|s_j\|&\leq \frac{\xi_j^2}{4}\left(\sqrt{\omega_{j-1}^2+\tfrac{2\varepsilon}{\xi_j j_{\max}}}-\omega_{j-1}\right)^2+\omega_{j-1}\xi_j\left(\sqrt{\omega_{j-1}^2+\tfrac{2\varepsilon}{\xi_j j_{\max}}}-\omega_{j-1}\right)\\
&=\frac{\xi_j^2}{4} \left(\omega_{j-1}^2+\tfrac{2\varepsilon}{\xi_j j_{\max}}-2\omega_{j-1}\sqrt{\omega_{j-1}^2+\tfrac{2\varepsilon}{\xi_j j_{\max}}}+\omega_{j-1}^2\right)\\&\quad\quad\quad\quad\quad\quad+\omega_{j-1}\xi_j\sqrt{\omega_{j-1}^2+\tfrac{2\varepsilon}{\xi_j j_{\max}}}-\omega_{j-1}^2\xi_j.
\end{align*}
Now, using $\xi_j<2$ we obtain
\begin{align*}
\xi_j^2\|s_j\|^2+2\omega_{j-1}\xi_j \|s_j\|
&\leq \frac{\xi_j}{2} \left(2\omega_{j-1}^2+\tfrac{2\varepsilon}{\xi_j j_{\max}}-2\omega_{j-1}\sqrt{\omega_{j-1}^2+\tfrac{2\varepsilon}{\xi_j j_{\max}}}\right)+\omega_{j-1}\xi_j\sqrt{\omega_{j-1}^2+\tfrac{2\varepsilon}{\xi_j j_{\max}}}-\omega_{j-1}^2\xi_j\\
&=\tfrac{\varepsilon}{j_{\max}}.
\end{align*}
resulting in $\|\cR^{\text{comp.}}_{j}\|\leq \|\cC_jw_{j-1}\|^2+\frac{\varepsilon}{j_{\max}}$.
Using Lemma~\ref{Lem:adi_compres} again for $w_{j-1}$ and repeating the above steps for $1\leq j\leq \jmax$ gives
\begin{align*}
\|\cR^{\text{comp.}}_{\jmax}\|&=\|w_{\jmax}\|^2=\|\prod\limits_{j=1}^{\jmax}\cC_jw_{0}\|^2+\sum\limits_{j=1}^{\jmax}\frac{\varepsilon}{j_{\max}}=\|\cR^{\text{exact}}_{\jmax}\|+\varepsilon.
\end{align*}
\end{proof}
Using Theorem~\ref{thm:ADI_resdecay} together with~\eqref{adi:resgap} yields the following conclusion.
\begin{corollary}
Under the same conditions as Theorem~\ref{thm:ADI_resdecay} we have $\|\cR^{\text{true}}_{\jmax}\|\leq \|\cR^{\text{exact}}_{\jmax}\|+2\varepsilon$.
\end{corollary}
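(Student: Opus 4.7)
The plan is to combine Theorem~\ref{thm:ADI_theorelax}(a), which controls the residual gap, with Theorem~\ref{thm:ADI_resdecay}, which controls the computed residual, by applying the triangle inequality to the decomposition from Definition~\ref{def:resgapADI}.

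First, I would rewrite $\cR^{\text{true}}_{\jmax}=\cR^{\text{comp.}}_{\jmax}+\Delta\cR^{\text{ADI}}_{\jmax}$ using Definition~\ref{def:resgapADI} and pass to norms to obtain $\|\cR^{\text{true}}_{\jmax}\|\leq\|\cR^{\text{comp.}}_{\jmax}\|+\|\Delta\cR^{\text{ADI}}_{\jmax}\|$. The corollary inherits all hypotheses of Theorem~\ref{thm:ADI_resdecay}; in particular, the relaxation strategy~\eqref{ADI_relax1_theo} is in force with the same target tolerance $\varepsilon$. Consequently, Theorem~\ref{thm:ADI_theorelax}(a) supplies $\|\Delta\cR^{\text{ADI}}_{\jmax}\|\leq\varepsilon$, while Theorem~\ref{thm:ADI_resdecay} supplies $\|\cR^{\text{comp.}}_{\jmax}\|\leq\|\cR^{\text{exact}}_{\jmax}\|+\varepsilon$. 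Adding these two estimates yields the claimed bound $\|\cR^{\text{true}}_{\jmax}\|\leq\|\cR^{\text{exact}}_{\jmax}\|+2\varepsilon$.

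There is essentially no technical obstacle here; the corollary is a bookkeeping step that packages the two preceding results into a statement about the true residual. The only item worth verifying explicitly is that the $\varepsilon$ appearing in the two invoked theorems is indeed the same tolerance, so that no rescaling factor (like $\varepsilon/2$ splitting) has to be inserted into~\eqref{ADI_relax1_theo}. Since both Theorem~\ref{thm:ADI_theorelax}(a) and Theorem~\ref{thm:ADI_resdecay} are stated with respect to the identical relaxation rule, the two bounds combine additively without any adjustment, and the proof reduces to a single line invoking both results.
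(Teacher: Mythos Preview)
Your proof is correct and matches the paper's approach exactly: the paper merely states that the corollary follows from Theorem~\ref{thm:ADI_resdecay} together with~\eqref{adi:resgap}, which is precisely your decomposition $\cR^{\text{true}}_{\jmax}=\cR^{\text{comp.}}_{\jmax}+\Delta\cR^{\text{ADI}}_{\jmax}$ combined with the bound $\|\Delta\cR^{\text{ADI}}_{\jmax}\|\leq\varepsilon$ from Theorem~\ref{thm:ADI_theorelax}(a) and the bound on $\|\cR^{\text{comp.}}_{\jmax}\|$ from Theorem~\ref{thm:ADI_resdecay}. Your observation that the same $\varepsilon$ appears in both invoked theorems (since Theorem~\ref{thm:ADI_resdecay} explicitly assumes the relaxation~\eqref{ADI_relax1_theo}) is the only point that needs checking, and you handle it correctly.
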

Hence, if~\eqref{ADI_relax1_theo} is used, then the (true) Lyapunov residual norms in inexact LR-ADI are a small perturbation of the
residuals of the exact method.  
\begin{remark}
Another way to enforce decreasing computed Lyapunov residuals norms $\|\cR^{\text{comp.}}_{j}\|$
is using the stronger condition $\|\cC_j\|<1$ together with a proportional inner accuracy $\|s_j\|<\mu\|\cR^{\text{comp.}}_{j-1}\|$, $0<\mu<1$, similar as in~\cite{ShaSS15} in the context of  inexact stationary iterations. For the LR-ADI iteration this does, however, not ensure small residual gaps, $\|\Delta\cR^{\text{comp.}}_{j}\|<\varepsilon$, and, thus, no small true residuals.
\end{remark}

\paragraph{Practical relaxation strategies for inexact LR-ADI}
The proposed stopping criteria~\eqref{ADI_relax_theo} are not very practical, since
the employed bound~\eqref{adi_estim_LS} will often give substantial overestimation of $\|Mv_k\|$ by several orders of magnitude which, in turn, will result in
smaller inner tolerances $\tau^{\text{LS}}$ than actually needed.  Furthermore, computing or estimating the 
norm of the large matrix
$M(A+\alpha_kM)^{-1}$ is possible, e.g. by inexact Lanczos-type approaches, but the extra effort for this does not pay
off.
Here we proposed some variations of the above approaches that are better applicable in an actual implementation. From the algorithmic description
(Algorithm~\ref{alg:lradi}) of the LR-ADI
iteration it holds
\begin{align}
 \|Mv_k\|=\frac{1}{\gamma^2_k}\|w_k-w_{k-1}\|\leq\frac{1}{\gamma^2_k}(\|w_k\|+\|w_{k-1}\|).
\end{align}
In practice, the sequence $\lbrace \|w_k\|\rbrace=\lbrace \sqrt{\|\cR^{\text{comp.}}_{k}\|}\rbrace$ is often monotonically decreasing. Assuming
$\|w_k\|\leq\|w_{k-1}\|$ suggests to use
$\|Mv_k\|\leq \frac{2\|w_{k-1}\|}{\gamma_k^2}$ in~\eqref{ADIresgapbound} leading to the relaxation criterion
\begin{subequations}\label{relaxadi_prac}
\begin{align}\label{relaxadi_prac1}
	\|s_k\|\leq\tau^{\text{LS}}_k=\frac{\varepsilon}{4j_{\max}\sqrt{\|\cR^{\text{comp.}}_{k-1}\|}}.
\end{align}
Starting from~\eqref{ADIresgapbound1},
assuming $\|\eta_{k-1}^{\text{ADI}}\|\leq \frac{(k-1)\varepsilon}{2\jmax}$, and enforcing
$\|\Delta\cR_k^{\text{ADI}}\|<\frac{k\varepsilon}{\jmax}$ we obtain
\begin{align}\label{relaxadi_prac2}
	\|s_k\|\leq\tau^{\text{LS}}_k=\frac{\frac{k\varepsilon}{\jmax}-2\|\eta_{k-1}^{\text{ADI}}\|}{4\sqrt{\|\cR^{\text{comp.}}_{k-1}\|}}.
\end{align}
\end{subequations}
The second relaxation strategy~\eqref{relaxadi_prac2} requires $\|\eta_{k-1}^{\text{ADI}}\|=\|S_{k-1}\Gamma_{k-1}Z_{k-1}^*M^*\|$ or an approximation thereof. From $\|\eta_{k-1}^{\text{ADI}}\|\leq \|\eta_{k-2}^{\text{ADI}}\|+\gamma_{k-1}^2\|Mv_{k-1}\|\|s_{k-1}\|$ 
basic bounds $\|\eta_{i}^{\text{ADI}}\|\leq u_i$, $i=0,\ldots,k-1$ can be computed in each step via
\begin{align}\label{relaxadi_update}
\|\eta_{k-1}^{\text{ADI}}\|\leq u_{k-1}:=u_{k-2}+\gamma_{k-1}^2\|Mv_{k-1}\|\|s_{k-1}\|,\quad u_0:=0.
\end{align}

Note that $Mv_{k-1}$ is required anyway to continue Algorithm~\ref{alg:lradi}.  
The linear residual  norms $\|s_i\|,~1\leq
i\leq k-1$ are sometimes available as byproducts of Krylov subspace solvers for linear systems if right preconditioning is used. In case of other forms of
preconditioning, the $s_i$ might need to be computed explicitly, which requires extra matrix vector products with $A$ (and $M$), or the  norms $\|s_i\|$ have to
be estimated in some other way. Similar to RKSM, for problems defined by real but unsymmetric coefficients, pairs of complex conjugated shifts can occur in
LR-ADI. These can be dealt with efficiently using the machinery developed in~\cite{BenKS13,morBenKS13,Kue16} to ensure that the majority of operations remains
in
real arithmetic. By following these results it is easily shown that if steps $k-2, k-1$ used a complex conjugated pairs of shifts, then in the
formula~\eqref{relaxadi_update} the real and imaginary parts of both $v_{k-2},s_{k-2}$ enter the update.

At the first look, \eqref{relaxadi_prac} appears to allow somewhat less relaxation compared to RKSM since only the square roots of the computed Lyapunov
residual norms appear in the denominator. However, the numerical examples in the next section show that with these relaxation strategies we can reduce the
amount of work for inexact LR-ADI by up to 50 per cent.

\section{Numerical examples}
\label{sec:num}
In this section we consider several numerical examples and apply inexact RKSM and inexact LR-ADI with our practical relaxation strategies.
The experiments were carried out in \matlab~2016a on a \intel\coretwo~i7-7500U CPU @ 2.7GHz with 16 GB RAM.
We wish to obtain an approximate solution such that the scaled true Lyapunov residual norm satisfies
\begin{align*}
 \fR:=\|\cR^{\text{true}}\|/\|B\|^2\leq \hat\varepsilon,\quad 0<\hat\varepsilon\ll 1,
\end{align*}
i.e., $\varepsilon=\hat\varepsilon\|B\|^2$. In all tests we desire to achieve this accuracy with $\hat\varepsilon=10^{-8}$ within at most $j_{\max}=50$ iteration
steps. In most cases we employ dynamic shift generation strategies using the approach in~\cite{DruS11} for RKSM and shifts based on a projection
idea for LR-ADI, see ~\cite{BenKS14,Kue16} for details. For the latter one, the projection basis is chosen as the last $\min(\mathrm{coldim}(Z_r),10r)$ columns
of the low-rank factor $Z_j$. 
The exception is one upcoming symmetric problem where optimal shifts following~\cite{Wac13} for the LR-ADI iteration are used. For comparative purposes, the exact EKSM is also tested for some examples.

We employ different Krylov subspace solvers and stopping criteria for the arising linear systems of equations, in
particular, we use fixed as well as dynamically chosen inner solver tolerances $\tau^{\text{LS}}$ with the proposed relaxation strategies. 
The fixed solve tolerances were determined a-priori via trial-and-error such that the residual behavior of inexact method mimicked the residuals of the method using sparse-direct solution approaches for the linear systems. 
If not stated otherwise, the values $\tau^{\text{LS}}_{\min}=10^{-12}$, $\tau^{\text{LS}}_{\max}=0.1$ are taken as minimal and maximal linear solve tolerances.
As Krylov subspace solvers we use BiCGstab and MINRES for problems with unsymmetric and symmetric ($A=A^*$, $M=M^*$) coefficients, respectively. Sparse-direct
solves were carried out by the \matlab~\textit{backslash}-routine, or, in case of EKSM, by precomputed sparse LU (or Cholesky) factorizations of $A$. EKSM and RKSM handled the generalized problems by means of the equivalent problem using sparse Cholesky factors of the matrix $M$ as described in Section~\ref{ssec:imple_gen}.
\begin{table}[t]
  \footnotesize
  \centering
  \caption{Properties and setting of used test equations including save guard constant $\delta$ in~\eqref{relax_sk_prac} and type of
preconditioner used. Here, iLU$(X,\nu)$ and iC$(X,\nu)$ refer to incomplete LU and, respectively, Cholesky factorization of the matrix  $X$ with drop
tolerance $\nu$.}
\setlength{\tabcolsep}{0.5em}
\renewcommand{\arraystretch}{1.1}
  \begin{tabularx}{\textwidth}{|l|l|l|l|X|l|l|}
    \hline
    Example & $n$ & $r$&matrices&description&$\delta$&prec.\\
    \hline
    \texttt{cd2d}&40000&1&$A\neq A^*$, $M=I$& finite difference discretization of $\Delta u - 100 x \frac{\partial{u}}{\partial x} - 200 y
\frac{\partial{u}}{\partial
y}$ on $[0,1]^2$&0.01&iLU(A,10$^{-3}$)\\
    \texttt{heat3d}&125000&4&$A=A^*$, $M=I$& finite difference discretization of heat equation on $[0,1]^3$&1&iC(-A,10$^{-2}$)\\
    \texttt{fem3d}&24389&1&$A\neq A^*$, $M=M^*\neq I$&finite element model of 3d convection-diffusion problem from~\cite{BenHSetal16}&$\half$&iLU(A,10$^{-2}$)\\
		\texttt{msd}&24002&4&$A\neq A^*$, $M=M^*\neq I$&modified version of second order dynamical system from~\cite{TruV09}&$0.1$&iLU($A_j$,10$^{-2}$)\\
    \texttt{fem3d-are}&24389&1&$A\neq A^*$, $M=M^*\neq I$&extension of \texttt{fem3d} to ARE~\eqref{care} from~\cite{BenHSetal16} including
$C\in\Rn$&$\half$&iLU(A,10$^{-2}$)\\
    \hline
  \end{tabularx}\label{tab:examples}
\end{table}
We consider five examples, two standard Lyapunov equations (\texttt{cd2d} and \texttt{heat3d}, where \texttt{heat3d} is an example for which $r>1$), two
generalized Lyapunov equation (\texttt{fem3d}, \texttt{msd}) and an algebraic Riccati equation (\texttt{fem3d-are}), in order to illustrate the theoretical results in this
paper. Details and setup on the examples we used are summarized in Table~\ref{tab:examples}. The matrices $B$ for examples \texttt{cd2d} and \texttt{heat3d} 
were generated randomly with a standard Gaussian distribution and the initialization \texttt{randn('state', 0)}. 
Examples \texttt{fem3d}, \texttt{fem3d-are}
provide vectors $B,C$~\cite{BenHSetal16}.
Because of the symmetric coefficient in \texttt{heat3d}, we use this example to experiment with precomputed shifts from~\cite{Wac13} instead of the dynamic shifts in all other experiments.
The example \texttt{msd} represents coefficients of a first order linearization of a second order dynamical system and is a modified version of an example used in~\cite{TruV09}. Further details on the setup of this example can be found in Appendix~\ref{sec:appB}. For efficient iterative solves, this example required updating the preconditioner in every iteration step.

\begin{figure}[ht!]
\caption{Illustration of the decay of the Galerkin solution for Example \texttt{cd2d}. Left: Row norms of $Y_j$, $H_j^{-1}Y_j$ and the corresponding bounds
(Corollary~\ref{corYentrybound}, Corollary~\ref{cor:deltahybound}) against the iteration number $j$. Right: Absolute values of the entries of the final $Y_j$.}
\label{fig:decayY}
\includegraphics[width=0.5\linewidth]{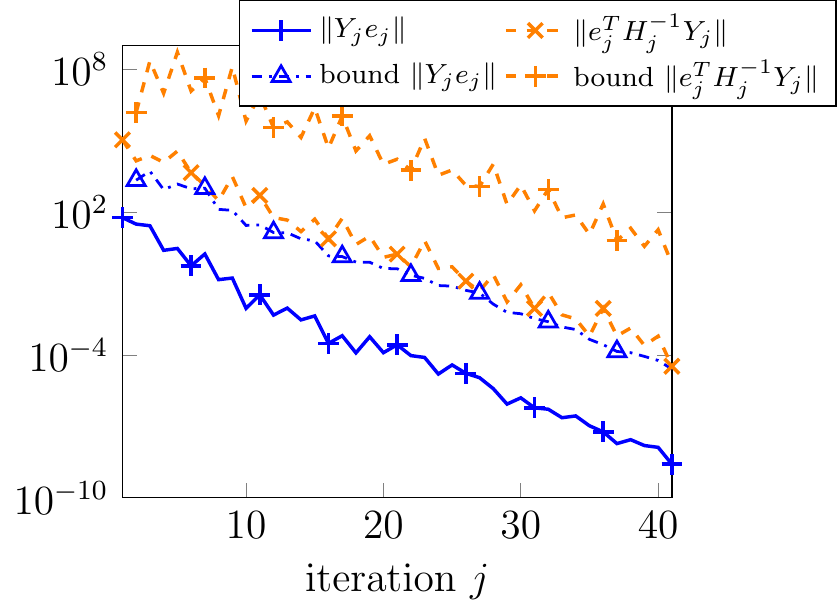}~\includegraphics[width=0.5\linewidth]{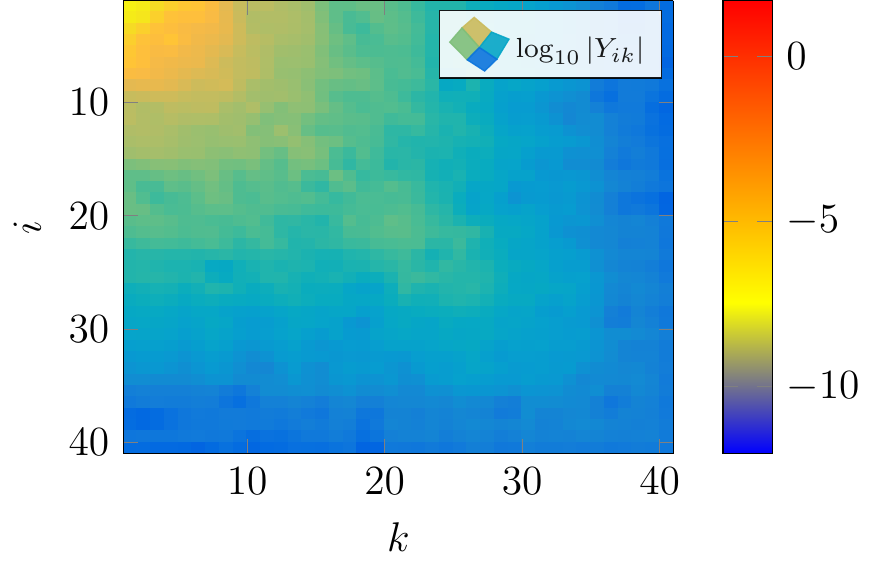}
\end{figure}

At first we briefly investigate the results in Section~\ref{sec:props} on the decay of Galerkin solution $Y_j$ using example \texttt{cd2d}.
We run the exact RKSM and plot the row norms of $Y_j$, $H_j^{-1}Y_j$ and the corresponding bounds obtained in Corollaries~\ref{corYentrybound}
and~\ref{cor:deltahybound} against the iteration number $j$, as well as the absolute values of the entries of the final Galerkin solution $Y_j$ in the left plot
of Figure~\ref{fig:decayY}. Real shift parameters were used for this experiment. The figures are an example to show that our theoretical bounds can indeed be
verified, but that they significantly overestimate the true norms. The right plot shows the decay of the entries $Y_j$ as predicted
by~Corollary~\ref{corYentrybound}. Similar results are obtained for the other examples.

\begin{table}[h!]
  \small
  \centering
  \caption{Experimental results. The columns denote the outer (i.e., EKSM, RKSM or LR-ADI) and inner method (which Krylov subspace method), the used inner stopping criterion (fixed or relaxed), the smallest and largest generated inner solve tolerances $\min\tau^{\text{LS}}$, $\max\tau^{\text{LS}}$, the
number of required outer iterations, column dimension of the low-rank solution factor, the final obtain scaled computed residual norm $\fR^{\text{comp}}_{j}$,
 the difference $\delta\fR_j:=\vert\fR^{\text{comp}}_j-\fR_j^{\text{true}}\vert$ between the true and the computed Lyapunov residual norm, the total number of inner iteration steps, 
the achieved relative savings regarding the amount of inner iteration steps compared to the run with a fixed inner tolerance, and the computing times in seconds. 
}\label{tab:ex_results}
  \setlength{\tabcolsep}{0.1em}
  \begin{tabularx}{\textwidth}{|X|l|l|r|r|r|r|r|r|r|r|r|r|}
    \hline   
Ex.&outer&inner&stop&$\min\tau^{\text{LS}}$&$\max\tau^{\text{LS}}$&it\textsuperscript{out}&dim&$\fR^{\text{comp}}_{j}$&$\delta\fR_j$&it\textsuperscript{in}
&save&time\\
    \hline\hline
    \multirow{9}{*}{
    \rotatebox{90}{
  \texttt{cd2d}
    }}    
&RKSM&direct&--&--&--&37&37&2.7e-09&&--&--&5.5\\ 
&RKSM&BICGSTAB&fixed&\multicolumn{2}{c|}{1.00e-10}&37&37&3.2e-09&2.0e-10&832&&5.3\\ 
&RKSM&BICGSTAB&relax \eqref{relax_sk_prac1}&2.0e-12&2.8e-02&37&37&4.2e-09&2.2e-09&618&25.7\%&3.9\\ 
&RKSM&BICGSTAB&relax \eqref{relax_sk_prac2}&2.0e-12&9.1e-03&37&37&3.0e-09&7.6e-09&614&26.2\%&3.7\\ 
\cline{2-13}
&EKSM&direct&--&--&--&40&80&3.6e-09&&--&--&1.5\\ 
\cline{2-13}
&LR-ADI&direct&--&--&--&49&49&7.0e-09&&--&--&6.1\\ 
&LR-ADI&BICGSTAB&fixed&\multicolumn{2}{c|}{1.0e-10}&49&49&7.0e-09&1.4e-12&1382&&6.8\\ 
&LR-ADI&BICGSTAB&relax \eqref{relaxadi_prac1}&5.0e-11&4.9e-03&49&49&7.0e-09&2.4e-11&1004&27.4\%&4.7\\ 
&LR-ADI&BICGSTAB&relax \eqref{relaxadi_prac2}&5.0e-11&2.1e-02&49&49&6.9e-09&1.7e-10&971&29.7\%&4.5\\ 
    \hline
    \multirow{9}{*}{%
\rotatebox{90}{
      \texttt{heat3d}}
    }    
&RKSM&direct&--&--&--&15&60&7.9e-09&&--&--&69.0\\ 
&RKSM&MINRES&fixed&\multicolumn{2}{c|}{1.00e-09}&15&60&7.9e-09&5.3e-14&492&&20.1\\ 
&RKSM&MINRES&relax \eqref{relax_sk_prac1}&2.0e-10&1.0e-01&15&60&7.9e-09&2.6e-10&265&46.1\%&13.2\\ 
&RKSM&MINRES&relax \eqref{relax_sk_prac2}&2.0e-10&1.0e-01&15&60&7.9e-09&2.9e-10&252&48.8\%&12.2\\ 
\cline{2-13}
&EKSM&direct&--&--&--&20&160&3.9e-11&&--&--&23.9\\ 
\cline{2-13}
&LR-ADI&direct&--&--&--&16&64&4.4e-09&&--&--&77.4\\ 
&LR-ADI&MINRES&fixed&\multicolumn{2}{c|}{1.0e-09}&16&64&4.4e-09&9.0e-14&439&&17.2\\ 
&LR-ADI&MINRES&relax \eqref{relaxadi_prac1}&5.0e-11&1.1e-03&16&64&4.4e-09&8.7e-14&301&31.4\%&13.5\\ 
&LR-ADI&MINRES&relax \eqref{relaxadi_prac2}&5.0e-11&7.7e-03&16&64&4.4e-09&1.3e-11&270&38.5\%&12.2\\ 
  \hline
\multirow{9}{*}{%
  \rotatebox{90}{\texttt{fem3d}}
    }   	
&RKSM&direct&--&--&--&23&23&3.5e-09&&--&--&44.8\\ 
&RKSM&BICGSTAB&fixed&\multicolumn{2}{c|}{2.00e-10}&23&23&4.0e-09&1.1e-10&474&&12.4\\ 
&RKSM&BICGSTAB&relax \eqref{relax_sk_prac1}&3.7e-11&4.5e-02&23&23&6.2e-09&1.2e-09&309&34.8\%&9.8\\ 
&RKSM&BICGSTAB&relax \eqref{relax_sk_prac2}&3.9e-11&1.0e-01&23&23&4.7e-09&1.4e-10&299&36.9\%&9.9\\ 
\cline{2-13}
&EKSM&direct&--&--&--&25&50&1.9e-09&&--&--&5.5\\ 
\cline{2-13}
&LR-ADI&direct&--&--&--&27&27&4.2e-09&&--&--&52.2\\ 
&LR-ADI&BICGSTAB&fixed&\multicolumn{2}{c|}{2.0e-10}&27&27&4.2e-09&1.8e-14&359&&2.2\\ 
&LR-ADI&BICGSTAB&relax \eqref{relaxadi_prac1}&5.0e-11&2.7e-04&27&27&4.2e-09&1.8e-11&246&31.5\%&1.6\\ 
&LR-ADI&BICGSTAB&relax \eqref{relaxadi_prac2}&5.0e-11&2.8e-03&27&27&4.4e-09&2.5e-10&209&41.8\%&1.5\\ 
\hline
\multirow{9}{*}{%
  \rotatebox{90}{\texttt{msd}}
    }   	
&RKSM&direct&--&--&--&86&344&9.5e-09&&--&--&120.9\\ 
&RKSM&BICGSTAB&fixed&\multicolumn{2}{c|}{6.67e-11}&86&344&9.6e-09&2.9e-08&738&&53.6\\ 
&RKSM&BICGSTAB&relax \eqref{relax_sk_prac1}&2.7e-12&1.4e-02&86&344&8.7e-09&5.8e-12&668&9.5\%&51.0\\ 
&RKSM&BICGSTAB&relax \eqref{relax_sk_prac2}&1.0e-12&3.3e-03&88&352&7.9e-09&2.3e-12&696&5.7\%&53.0\\ 
\cline{2-13}
&EKSM&direct&--&--&--&70&560&1.5e-09&&--&--&17.5\\ 
\cline{2-13}
&LR-ADI&direct&--&--&--&66&264&1.0e-08&&--&--&164.4\\ 
&LR-ADI&BICGSTAB&fixed&\multicolumn{2}{c|}{6.7e-11}&59&236&3.0e-09&1.9e-11&1683&&69.3\\ 
&LR-ADI&BICGSTAB&relax \eqref{relaxadi_prac1}&2.5e-11&1.5e-03&62&248&9.9e-09&1.1e-13&1451&13.8\%&55.4\\ 
&LR-ADI&BICGSTAB&relax \eqref{relaxadi_prac2}&2.5e-11&1.5e-02&65&260&4.7e-10&1.7e-10&1065&36.7\%&43.0\\
\hline

\multirow{3}{*}{
    \rotatebox{90}{\texttt{fem3d-are}}
    }  
&RKSM&direct&--&--&--&22&22&5.8e-09&&--&--&43.6\\ 
&RKSM&BICGSTAB&fixed&\multicolumn{2}{c|}{2.00e-10}&22&22&5.8e-09&4.8e-14&436&&34.8\\ 
&RKSM&BICGSTAB&relax (\ref{relax_sk_prac1})&1.1e-10&1.0e-01&22&22&6.7e-09&2.3e-09&222&49.1\%&12.9\\ 
&RKSM&BICGSTAB&relax (\ref{relax_sk_prac2})&1.1e-10&1.0e-01&22&22&6.7e-09&2.3e-09&222&49.1\%&12.9\\ 
\hline
\end{tabularx}
\end{table}

We now experiment with the different practical relaxation strategies (\ref{relax_sk_prac}) and (\ref{relaxadi_prac}) from Sections~\ref{sec:rksm} and~\ref{sec:adi}
for the inner iteration. In Table~\ref{tab:ex_results} we report the results for all examples. 
There, we give, among other relevant information on the performance of the outer method under inexact inner solves, also the
final obtained scaled computed residual norms $\fR^{\text{comp}}_{j}$ (using the formula~\eqref{exactRKres} for RKSM and~\eqref{adi_resexact} for LR-ADI). For
assessing the reliability of the value of $\fR^{\text{comp}}_{j}$, the distance
$\delta\fR_j:=\vert\fR^{\text{comp}}_j-\fR_j^{\text{true}}\vert$ to the true scaled residual norms is also listed, where $\fR_j^{\text{true}}$ was computed
using the Lanczos process on $\cR_j$.

First, we observe that in all examples the difference between the true and the computed Lyapunov residual norm, $\delta\fR_j$, is of the order $\cO(10^{-9})$, or smaller. 

The second observation we make is that both practical relaxation criteria for RKSM, namely (\ref{relax_sk_prac1}) and (\ref{relax_sk_prac2}) are effective and
lead to a reduction in overall inner iteration numbers. They both lead to nearly the same results for the number of inner iterations, the gain in using
(\ref{relax_sk_prac2}) over (\ref{relax_sk_prac1}) is very minor. For our examples, by using the relaxed stopping criterion up to $50$ per cent of inner iterations can by saved compared to a fixed inner tolerance.

For the LR-ADI method we consider the two relaxation criteria (\ref{relaxadi_prac1}) and (\ref{relaxadi_prac2}). Again, we observe a reduction in the total number of iterations for both relaxation strategies, but we also see that the second relaxation criterion (\ref{relaxadi_prac2}) reduces the iteration numbers even
further compared to  (\ref{relaxadi_prac1}), so the use of (\ref{relaxadi_prac2}) is generally recommended. Here it pays off that the second strategy takes the previous iterate into account. The total savings in inner iterations between fixed
and relaxed tolerance solves for LR-ADI is between $14$ and $42$ per cent in our examples. 

The fairer comparison between the fixed and relaxed tolerance solve is the computation time; in Table~\ref{tab:ex_results} we report the computation time for
direct solves as well as iterative solves with fixed and relaxed solve tolerances. For all examples we see that the relaxed solve tolerance also
leads to (sometimes significant) savings in computation time. 
For Example \texttt{fem3d}, the generalized Lyapunov equation, the absolute time saving is not so significant
(both for RKSM and LR-ADI), however, for the other three examples the savings are quite large, in particular for inexact RKSM. The results for the Riccati
example \texttt{fem3d-are} indicate that the proposed relaxation criteria also work for inexact RKSM for Riccati equations.
Note that for Example
\texttt{cd2d}, the direct solver outperforms the iterative methods with fixed small solve tolerance (both for RKSM and LR-ADI), however, the relaxed tolerance
versions perform similar to the direct methods in terms of computation time. This is to be expected as example
\texttt{cd2d} represents a  two-dimensional problem, where sparse-direct solvers are usually very efficient. For the three-dimensional examples
\texttt{heat3d}, \texttt{fem3d} and \texttt{fem3d-are} the iterative solvers significantly outperform the direct method.
In some cases we observe that the inexact methods require slightly more outer iteration steps (e.g., RKSM with relaxation~\eqref{relax_sk_prac2} for example~\texttt{msd}), indicating that further fine tuning of the relaxation parameters might be necessary. Sometimes  the number of outer steps is lower for the inexact method (inexact LR-ADI for example~\texttt{msd}).
Regarding the comparison with EKSM, we observe that EKSM achieves sometimes the smallest computations times (e.g., examples~\texttt{cd2d}, \texttt{msd}) but always generates the largest subspace dimensions among all tested methods.
Using a block-MINRES for the \texttt{heat3d} example (with $r=4$) largely led to very similar results in terms of the numbers of inner iteration steps, but the
overall computing times were slightly larger. 

As interesting side observation, we point out that RKSM, LR-ADI generated new shift parameters on their own in each run, which still did not lead to substantial differences
regarding the Lyapunov residual behavior, although the adaptive shift generation techniques~\cite{DruS11,BenKS14} are based on the eigenvalues of $T_j$
(rational Ritz values of $A$) and, hence, depend on the built up subspace. One explanation might be that the eigenvalue generation itself can be seen as inexact
rational Arnoldi process for the eigenvalue problem and if the inner solve tolerances are chosen intelligently, no large differences regarding the Ritz
values should appear~\cite{LehM98,Sim05, FrSp09}.
For the \texttt{heat3d} example precomputed shifts~\cite{Wac13} were used within LR-ADI leading to almost indistinguishable residual curves (cf. right plot in Figure~\ref{fig:heat3d_res}) indicating that the optimal shift approaches for certain problem classes still work under inexact solves.
Similar observations were made for the remaining examples when precomputed shifts were used for both exact and inexact methods.

\begin{figure}
\caption{Results for Example~\texttt{heat3d}: Scaled computed residual norms $\fR^{\text{comp}}$ and inner tolerances $\tau^{\text{LS}}$ vs iteration numbers
obtained by (in)exact RKSM (left) and LR-ADI (right). Here, LR-ADI uses precomputed shifts.}
\label{fig:heat3d_res}
\includegraphics[width=0.95\textwidth]{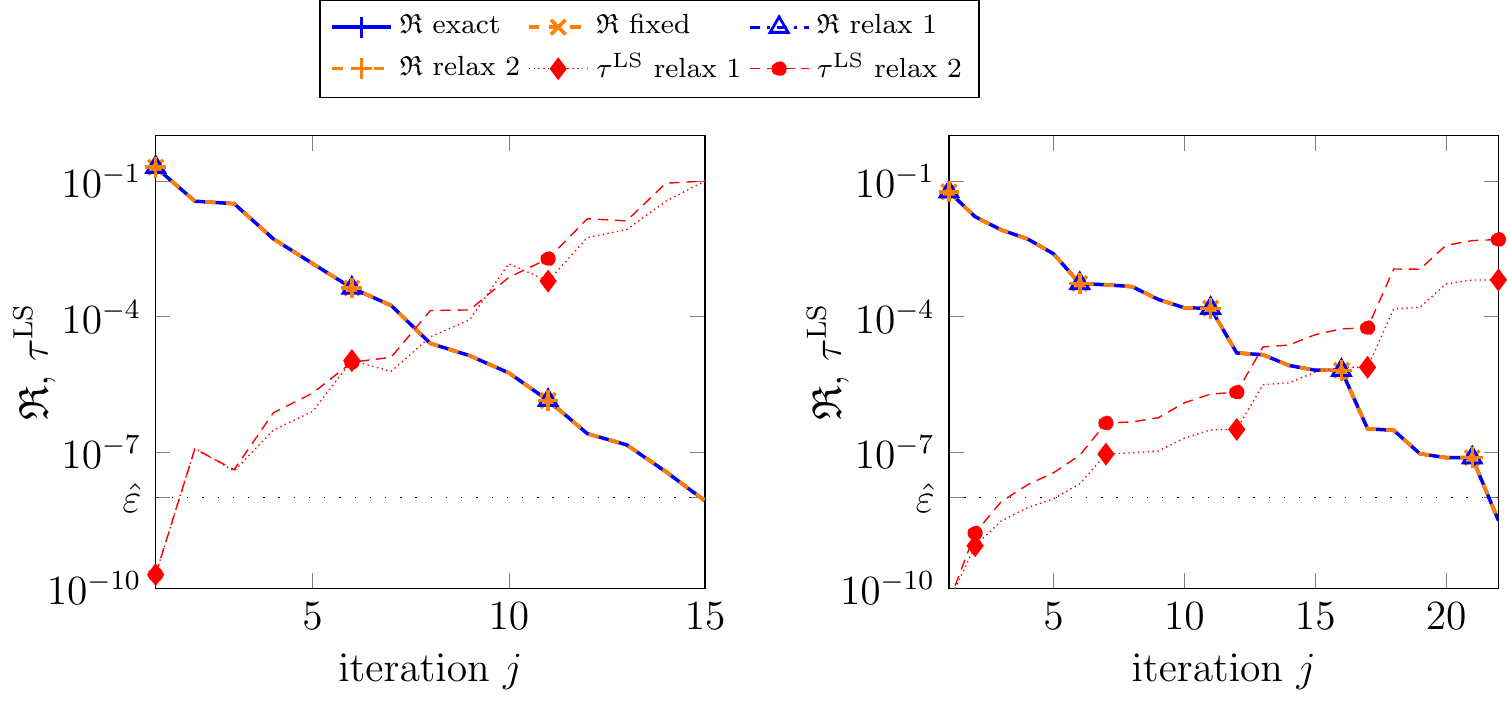}
\end{figure}

Figure~\ref{fig:heat3d_res} shows the scaled computed residual norms $\fR^{\text{comp}}$ and inner tolerances $\tau^{\text{LS}}$ vs iteration numbers obtained
by (in)exact RKSM and LR-ADI for Example~\texttt{heat3d}. 

On the left we plot the convergence history for the exact RKSM (e.g. direct solves within the inner iteration), the inexact RKSM with fixed small solve
tolerance within the iterative solve and the inexact RKSM with relaxation criterion (\ref{relax_sk_prac1}) and (\ref{relax_sk_prac2}) within the iterative
solution of the inner linear system. All computed residual norms are decreasing and virtually indistinguishable. The solve tolerances when using relaxation
criterion (\ref{relax_sk_prac1}) are shown in dotted lines with diamonds and the ones using criterion (\ref{relax_sk_prac2}) are shown in dashed lines with red
circles. The relaxation criteria lead to increasing inner solve tolerances, but as already observed in Table~\ref{tab:ex_results}, both criteria
for inexact RKSM give nearly the same results.

The right plot in Figure~\ref{fig:heat3d_res} shows the same results for LR-ADI. Again,the convergence history of the residual norms for inexact LR-ADI using the two relaxation strategies is not visibly distinguishable from the exact LR-ADI. However, we observe that the second relaxation criterion (\ref{relaxadi_prac2}), shown in dashed lines with red circles, gives better results, e.g. more relaxation and hence fewer inner iterations, than the first relaxation criterion (\ref{relaxadi_prac1}), shown in dotted lines with
diamonds, a result also observed in Table~\ref{tab:ex_results}. Similar plots as in Figure~\ref{fig:heat3d_res} can be obtained for other examples.


\section{Conclusions and future work}\label{sec:concl}
The numerical solution of large scale Lyapunov equations is a very important problem in many applications. The rational Krylov subspace method (RKSM) and the low-rank alternating directions implicit (LR-ADI) iteration are well-established methods for computing
low-rank solution factors of large-scale Lyapunov equations. The main task in both those methods is to solve a linear system at each step, which is usually carried out iteratively and hence inexactly. 

We observed empirically that, when solving the linear system at each iteration step, the accuracy of the solve may be relaxed while maintaining
the convergence to the solution of the Lyapunov equation. In this paper we have presented a theoretical foundation for explaining this phenomenon, both for the
inexact RKSM method and the inexact low-rank ADI iteration. For both methods we introduced a so-called residual gap, which depends on the accuracy of the linear
system solve and on quantities arising from the solution methods for the large scale Lyapunov equation. We analyzed this gap for each method which provided
theoretical relaxation criteria for both inexact RKSM and inexact ADI. These criteria are often not applicable in practice as they contain unknown and/or
overestimated quantities. Hence, we gave   
practical relaxation criteria for both methods. Our numerical results indicate that using flexible accuracies gives very good results and can reduce the
amount of work for solving large scale Lyapunov equations by up to 50 per cent. 

One numerical experiment with inexact RKSM indicated that relaxation strategies might also be fruitful for low-rank methods for algebraic Riccati
equations~\cite{BenS13,BenKS14,SimSM14,Sim16,BenHSetal16,BenBKetal18}, making this an obvious future research topic, together with inexact linear solves in
low-rank methods for
Sylvester equations~\cite{BenLT09,BenK14,Kue16}. In this work, we restricted ourselves to standard preconditioning techniques. Improved concepts such as \textit{tuned}
preconditioners and similar ideas~\cite{FrSp09,BerM17} might further enhance the performance of the inner iteration process.
Preliminary tests, however, did not yield any performance gain from these techniques worth mentioning, further investigations are necessary in this
direction.
A further research direction worth pursuing is to reduce the computational effort for solving the sequences of shifted linear systems by storing the Krylov
basis
obtained from solving one (e.g., the first) linear system and employing subspace recycling techniques as, e.g., discussed for LR-ADI in~\cite{morLi00} and
for rational Krylov methods in the context of model
order reduction in~\cite{AhmSG12}. 
Allowing inexact matrix vector products, and in case of generalized equations also inexact solves with $M$, represents a further, more challenging research
perspective.

\paragraph{Acknowledgements} The authors are grateful to Cost Action EU-MORNET (TD1307) and the Department of Mathematical Sciences at Bath, that provided
funding for research visits of PK to the University of Bath, where substantial parts of this work have been conducted. 
This work was primarily generated while PK was affiliated with the Max Planck Institute for Dynamics of Complex Technical Systems in Magdeburg, Germany. 
Furthermore, the authors thank Kirk
Soodhalter (Trinity College Dublin) for 
insightful discussion regarding block Krylov subspace methods. Moreover, the authors would like to thank the referees for their valuable and insightful comments which helped to improve the manuscript.

\appendix
\section{Proof of Lemma~\ref{Lem:ekinvHj}}
\label{sec:appA}
\begin{proof}
From $\omega \underline{H}_j=0$ we have for $k\leq j$
 \begin{align*}
 0=\omega\smb \begin{smallmatrix}
           H_{k}\\
           e_{k}^*h_{k+1,k}\\
           0_{j-k,k}
          \end{smallmatrix}\Bigg|
          H_{:,k+1:j}
          \sme,
          \end{align*}
which, for $k=j$, immediately leads to $f_j^{(j)}=-\omega_{1:j}/(h_{j+1,j}\omega_{j+1})$, the first equality in~\eqref{ekinHj_fjj}.  
Similarly, it is easy to show that for $k<j$, a left null space vector $\hat\omega\in\C^{1\times k+1}$ of $\underline{H}_{k}$ is given by the first $k+1$
entries of the null vector
$\omega$ of $\underline{H}_j$. Hence, $f^{(k)}_k=-\omega_{1:k}/(h_{k+1,k}\omega_{k+1})$ holds for all $k\leq j$. 

For computing $f_j^{(k)}= e_k^*H_j^{-1}$, $k<j$, we use the following partition of $H_j$ and consider the splitting $f^{(k)}_j = [u,y]$, $u\in\C^{1\times k}$, $y\in\C^{1\times k-j}$:
\begin{equation*}
 e_k^*=\smb 0_{1,k-1}&\big|1\big|&0_{1,j-k}\sme=f^{(k)}_jH_j=[u,y]\smb \begin{smallmatrix}
           H_{k-1}\\
           e_{k-1}^*h_{k,k-1}\\
           0_{j-k,k-1}
          \end{smallmatrix}\Bigg|
          H_{:,k:j}
          \sme=\smb u\smb H_{k-1}\\
           e_{k-1}^*h_{k,k-1}
           \sme&\Bigg|[u,y]H_{:,k:j}\sme.
\end{equation*}
This structure enforces conditions on $[u,y]$, which we now explore. First, $u$ has to be a multiple of $\omega_{1:k}$. Here we exploited that due to the Hessenberg structure,
$\omega_{1:k}\underline{H}_{k-1}=0$. In particular,
\begin{align*}
 u_{1:k-1}H_{k-1}+u_ke_{k-1}^*h_{k,k-1}=0,
\end{align*}
such that $u_kh_{k,k-1}e_{k-1}^*H_{k-1}^{-1}=-u_{1:k-1}$. Since $f^{(k-1)}_{k-1}=e_{k-1}^*H_{k-1}^{-1}$
we can infer $u_{1:k-1}=-u_kh_{k,k-1}f^{(k-1)}_{k-1}$ and, consequently,~\eqref{ekinHj_fold} for $k>1$.
Similarly, $[\omega_{1:k},y]$ has to satisfy 
\begin{align*}
0_{1,j-k}=[\omega_{1:k},y]H_{:,k+1:j}&=\omega_{1:k}H_{1:k,k+1:j}+yH_{k+1:j,k+1:j}=\omega H_{1:j+1,k+1:j}\\
 &=\omega_{1:k}H_{1:k,k+1:j}+\omega_{k+1:j+1}H_{k+1:j+1,k+1:j},
\end{align*}
leading to 
\begin{align*}
 y=\omega_{k+1:j+1}H_{k+1:j+1,k+1:j}H_{k+1:j,k+1:j}^{-1}=\omega_{k+1:j} + \omega_{j+1} h_{j+1,j}e_{j-k}^*H_{k+1:j,k+1:j}^{-1},
\end{align*}
where $e_{j-k}$ is a canonical vector of length $j-k$. Hence,
\begin{align*}
 v_j^{(k)}=[\omega_{1:k},y]=\omega_{1:j}+[0_{1,k},[0_{1,j-k-1},h_{j+1,j}\omega_{j+1}]H_{k+1:j,k+1:j}^{-1}],
\end{align*}
leading to \eqref{ekinHj_fjk}. Finally, the normalization constant $\phi_{j}^{(k)}$ is obtained by the requirement $[u,y]H_{1:j,k}=1$.
\end{proof}
\section{Setup of example \texttt{msd}}
\label{sec:appB}
The example \textit{msd} represents a variant of one realization of the series of examples in~\cite{TruV09}. Set $K:=\diag{I_3\otimes K_1,k_0+k_1+k_2+k_3}+k_+e_{3n}^T-e_{3n}k_+\in\R^{n_2\times n_2},~K_1:=\mathrm{tridiag}(-1,2,-1)\in\R^{n_1\times n_1}$ with $k_0=0.1$, $k_1=1$, $k_2=2$, $k_3=4$, $n_2=3n_1+1$, and $k_+:=[(k_1,k_2,k_3)\otimes e_{n_1}^T,0]^T$. Further, $M_1:=\diag{m_1I_{n_1},m_2I_{n_1},m_3I_{n_1}m_0}\in\R^{n_2\times n_2}$ with $m_0=1000$, $m_1=10$, $m_2=k_2$, $m_3=k_3$ and $D:=\alpha M+\beta(K+K(M^{-1}K)+K(M^{-1}K)^2+K(M^{-1}K)^3)+\nu[e_1,e_{n_1},e_{2n_1+1}][e_1,e_{n_1},e_{2n_1+1}]^T$, $\alpha=0.8$, $\beta=0.1$, $\nu=16$. In~\cite{TruV09} a different matrix $D$ was used involving a term $M^{\half}\sqrt{M^{-\half}KM^{-\half}}M^{\half}$ which was infeasible to set up in a large scale setting (the middle matrix square is a dense matrix). The version of $D$ used here was similarly used in~\cite{morBenKS13}. Construct the $n\times n$, $n:=2n_2$ block matrices $\hA:=\smb 0&I_{n_2}\\-K&-D\sme$, $\hM:=\diag{I_{n_2},M_1}$ representing a linearization of the quadratic matrix pencil $\lambda^2M_1+\lambda D+K$. The right hand side factor is set up as $\hB=\smb I_{2m}&0_{2m,n_2-m}&\begin{smallmatrix}0_{m,n_2-m}&I_m\\I_m&0_{m,n_2-m}\end{smallmatrix}\sme^T\in\R^{n\times 2m}$.
In Section~\ref{sec:num} we use $n_1=4000$, $m=2$ leading to $n=24002$, $r=4$. 
Finally,  $A:=P^T\hA P$, $M:=P^T\hM P$, $B:=P^T\hB$ as in \cite[Section 5.6]{KreMR17}, where $P$ is a perfect shuffle permutation: This leads to banded matrices $A,M$ except some sole off-band entries form the low-rank update of $D$, resulting in noticable computational savings when applying sparse direct solvers or computing sparse (incomplete) factorizations. An alternative to exploit the structure in $\hA,\hM$ within RKSM, LR-ADI is described in, e.g., \cite{morBenKS13, Kue16}.

\bibliographystyle{abbrv}      

\end{document}